\spnewtheorem{fact}{Fact}{\bf}{\rm}
\newcommand{\F}{\textup{F}}
\newcommand\Cb{\mathbb{C}}
\newcommand\Eb{\mathbb{E}}
\newcommand\Pb{\mathbb{P}}
\newcommand\Rb{\mathbb{R}}
\newcommand{\zerobf}[0]{\mathbf{0}}
\newcommand{\Abf}[0]{\mathbf{A}}
\newcommand{\Bbf}[0]{\mathbf{B}}
\newcommand{\Cbf}[0]{\mathbf{C}}
\newcommand{\Dbf}[0]{\mathbf{D}}
\newcommand{\Ebf}[0]{\mathbf{E}}
\newcommand{\Fbf}[0]{\mathbf{F}}
\newcommand{\Gbf}[0]{\mathbf{G}}
\newcommand{\Ibf}[0]{\mathbf{I}}
\newcommand{\Lbf}[0]{\mathbf{L}}
\newcommand{\Mbf}[0]{\mathbf{M}}
\newcommand{\Pbf}[0]{\mathbf{P}}
\newcommand{\Qbf}[0]{\mathbf{Q}}
\newcommand{\Rbf}[0]{\mathbf{R}}
\newcommand{\Sbf}[0]{\mathbf{S}}
\newcommand{\Tbf}[0]{\mathbf{T}}
\newcommand{\Ubf}[0]{\mathbf{U}}
\newcommand{\Vbf}[0]{\mathbf{V}}
\newcommand{\Xbf}[0]{\mathbf{X}}
\newcommand{\Ybf}[0]{\mathbf{Y}}
\newcommand{\Zbf}[0]{\mathbf{Z}}
\newcommand{\abf}[0]{\mathbf{a}}
\newcommand{\ibf}[0]{\mathbf{i}}
\newcommand{\jbf}[0]{\mathbf{j}}
\newcommand{\ubf}[0]{\mathbf{u}}
\newcommand{\vbf}[0]{\mathbf{v}}
\newcommand{\xbf}[0]{\mathbf{x}}
\newcommand{\Sigmabf}[0]{\bm{\Sigma}}
\newcommand{\Phibf}[0]{\bm{\Phi}}
\newcommand{\Psibf}[0]{\bm{\Psi}}
\newcommand{\Pibf}[0]{\bm{\Pi}}
\newcommand{\Omegabf}[0]{\bm{\Omega}}
\newcommand{\omegabf}[0]{\bm{\omega}}
\newcommand{\Thetabf}[0]{\bm{\Theta}}
\newcommand{\Ic}{\mathcal{I}}
\newcommand{\Xe}{\boldsymbol{\mathscr{X}}}
\newcommand{\nnz}{\textnormal{nnz}}
\newcommand{\FFT}{\textnormal{FFT}}
\newcommand{\diag}{\textnormal{diag}}
\renewcommand{\vec}[0]{\operatorname{vec}}
\DeclareMathOperator*{\kr}{\bigodot}
\DeclareMathOperator*{\startimes}{\scalerel*{\circledast}{\sum}}
\newcommand{\rowvec}[1]{\begin{bmatrix} #1 \end{bmatrix}}
\newcommand{\defeq}{\stackrel{\text{\tiny \textnormal{def}}}{=}}  %   another alternative
\newcommand{\giturl}{\url{https://github.com/OsmanMalik/countsketch-matrix-tensor-id}}
\journalname{Advances in Computational Mathematics}
\begin{document}

\title{Fast Randomized Matrix and Tensor Interpolative Decomposition Using CountSketch\thanks{This version of the article has been accepted for publication, after peer review (when applicable) and is subject to Springer Nature’s \href{https://www.springernature.com/gp/open-research/policies/accepted-manuscript-terms}{AM terms of use}, but is not the Version of Record and does not reflect post-acceptance improvements, or any corrections. The Version of Record is available online at: \url{http://dx.doi.org/10.1007/s10444-020-09816-9}}
}

\titlerunning{Fast Randomized Matrix and Tensor ID Using CountSketch}        % if too long for running head

\author{Osman Asif Malik \and Stephen Becker}

\institute{O.~A.~Malik \at
	Department of Applied Mathematics, University of Colorado Boulder \\
	\email{osman.malik@colorado.edu} \\
	\and
	S.~Becker \at
	Department of Applied Mathematics, University of Colorado Boulder \\
	\email{stephen.becker@colorado.edu}
}

\date{Received: date / Accepted: date}

\maketitle

\begin{abstract}
	We propose a new fast randomized algorithm for interpolative decomposition of matrices which utilizes \textsc{CountSketch}. We then extend this approach to the tensor interpolative decomposition problem introduced by Biagioni et al.\ (J.~Comput.\ Phys.~281, pp.~116--134, 2015). Theoretical performance guarantees are provided for both the matrix and tensor settings. Numerical experiments on both synthetic and real data demonstrate that our algorithms maintain the accuracy of competing methods, while running in less time, achieving at least an order of magnitude speed-up on large matrices and tensors.
\keywords{Matrix Decomposition \and Tensor Decomposition \and Sketching}
\subclass{15-02}
\end{abstract}

\section{Introduction} \label{sec:introduction}

Matrix decomposition is a fundamental tool used to compress and analyze data, and to improve the speed of computations. For data and computational problems involving more than two dimensions, analogous tools in the form of tensors and associated decompositions have been developed \citep{kolda2009}. In many modern applications, matrices and tensors can be very large, which makes decomposing them especially challenging. One approach to dealing with this problems is to incorporate randomization in decomposition algorithms \citep{halko2011}. In this paper, we consider the interpolative decomposition (ID) for matrices, as well as the tensor ID problem. By tensor ID, we mean the tensor rank reduction problem as introduced by \citet{biagioni2015}; we provide an exact definition in Section~\ref{sec:tensor-id}. We make the following contributions in this paper:
\begin{itemize}
	\item We propose a new fast randomized algorithm for matrix ID and provide theoretical performance guarantees.
	\item We propose a new randomized algorithm for tensor ID. To the best of our knowledge, we provide the first performance guarantees for any randomized tensor ID algorithm.
	\item We validate our algorithms on both synthetic and real data.
	\item We propose a small modification to the standard \textsc{CountSketch} formulation which helps avoid certain rank deficiency issues and slightly strengthen our matrix ID results.
\end{itemize}

\subsection{Tensors and the CP Decomposition} \label{sec:tensors-and-CP}

For a more complete introduction to tensors and their decompositions, see the review paper by \citet{kolda2009}.
A \emph{tensor} $\Xe \in \Rb^{I_1 \times I_2 \times \cdots \times I_N}$ is an $N$-dimensional array of real numbers, also called an $N$-way tensor. The number of elements in such a tensor is denoted by $\tilde{I} \defeq \prod_{n=1}^N I_n$. Boldface Euler script letters, e.g.\ $\Xe$, denote tensors of dimension 3 or greater; bold capital letters, e.g.\ $\Xbf$, denote matrices; bold lowercase letters, e.g.\ $\xbf$, denote vectors; and lowercase letters, e.g.\ $x$, denote scalars. Uppercase letters, e.g.\ $I$, are used to denote scalars indicating dimension size. A colon is used to denote all elements along a certain dimension. For example, $\xbf_{m:}$ and $\xbf_{:n}$ are the $m$th row and $n$th column of the matrix $\Xbf$, respectively. If $\jbf$ is a vector of column indices, then $\Xbf_{:\jbf}$ denotes the submatrix of $\Xbf$ consisting of the columns of $\Xbf$ whose indices are listed in $\jbf$. $\Ibf^{(K)}$ denotes the $K \times K$ identity matrix. For a matrix $\Xbf$, $\sigma_i(\Xbf)$ denotes its $i$th singular value, and $\sigma_{\text{max}}(\Xbf)$ and $\sigma_{\text{min}}(\Xbf)$ denote the maximum and minimum singular values, respectively. The condition number of $\Xbf$ is defined as $\kappa(\Xbf) \defeq \sigma_{\text{max}}(\Xbf) / \sigma_{\text{min}}(\Xbf)$. The number of nonzero elements of $\Xbf$ is denoted by $\nnz(\Xbf)$. For positive integers $m$ and $n > m$, let $[m] \defeq \{ 1, 2, \ldots, m \}$ and $[m:n] \defeq \{ m, m+1, \ldots, n \}$. 
The \emph{Hadamard product}, or element-wise product, of matrices is denoted by $\circledast$. The \emph{Khatri--Rao product} of matrices is denoted by $\odot$. 
The \emph{tensor Frobenius norm} is denoted by $\|\Xe\|_\F \defeq \|\vec(\Xe)\|_2$, where $\vec(\Xe)$ flattens the tensor $\Xe$ into a column vector. A norm $\|\cdot\|$ with no subscript will always denote the matrix spectral norm.

The singular value decomposition (SVD) decomposes matrices into a sum of rank-1 matrices \citep{golub2013}. Similarly, the \emph{CP decomposition} decomposes a tensor $\Xe \in \Rb^{I_1 \times I_2 \times \cdots \times I_N}$ into a sum of rank-1 tensors:
\begin{equation} \label{eq:def-X-tensor}
\Xe = \sum_{r=1}^R \lambda_r \abf^{(1)}_{:r} \circ \abf^{(2)}_{:r} \circ \cdots \circ \abf^{(N)}_{:r} = \sum_{r=1}^R \lambda_r \Xe^{(r)},
\end{equation}
where $\circ$ denotes outer product, and each $\Xe^{(r)}$ is a rank-1 tensor. Each $\lambda_r$ is called an \emph{s-value}, each $\Abf^{(n)} = [\abf_{:1}^{(n)} \;\; \abf_{:2}^{(n)} \;\; \cdots \;\; \abf_{:R}^{(n)}]$ is called a \emph{factor matrix}, and all vectors $\abf^{(n)}_{:r}$ have unit 2-norm.
Usually, a tensor $\Xe$ is said to be of rank-$R$ if $R$ is the smallest possible number of terms required in a representation of the form \eqref{eq:def-X-tensor}. We will use the term ``rank'' in a looser sense to mean the (not necessarily minimal) number of rank-1 terms in a representation of the form \eqref{eq:def-X-tensor}.

\subsection{Interpolative Decomposition} \label{sec:id}

\subsubsection{Matrix Interpolative Decomposition} \label{sec:matrix-id}

For a matrix $\Abf \in \Rb^{I \times R}$, a rank-$K$ \emph{interpolative decomposition} (ID) takes the form $\Abf \approx \Abf_{:\jbf} \Pbf$,
where $\Abf_{:\jbf} \in \Rb^{I \times K}$ consists of a subset of $K < R$ columns from $\Abf$, and $\Pbf \in \Rb^{K \times R}$ is a coefficient matrix which is well-conditioned in some sense. The fact that the decomposition is expressed in terms of the columns of $\Abf$ means that $\Abf_{:\jbf}$ inherits properties such as sparsity and non-negativity from $\Abf$. Moreover, expressing the decomposition in terms of columns of $\Abf$ can increase interpretability. Algorithm~\ref{alg:matrix-ID} outlines one method to compute a matrix ID.
\begin{algorithm}[htb!]
	\caption{Matrix ID via QR \citep{voronin2017a}}
	\label{alg:matrix-ID}
	\begin{algorithmic}[1]
		\STATE {\bfseries Input:} $\Abf \in \Rb^{I \times R}$, target rank $K$
		\STATE {\bfseries Output:} $\Pbf \in \Rb^{K \times R}$, $\jbf \in [R]^{K}$ 
		\STATE {Perform rank-$K$ QR factorization $\Abf \Pibf \approx \Qbf^{(1)} \Rbf^{(1)}$}
		\STATE {Define $\jbf \in [R]^{K}$ via $\Ibf_{:\jbf}^{(R)} = \Pibf_{:[K]}$}
		\STATE {Partition $\Rbf^{(1)}$ into two parts: $\Rbf^{(11)} = \Rbf^{(1)}_{:[K]}$, $\Rbf^{(12)} = \Rbf^{(1)}_{:[K+1:R]}$}
		\STATE {Compute $\Pbf^\top = \Pibf \rowvec{\Ibf^{(K)} & (\Rbf^{(11)})^{-1} \Rbf^{(12)}}^\top$}
	\end{algorithmic}
\end{algorithm}
\begin{fact} \label{fact:matrix-ID}
	If the partial QR factorization on line~3 in Algorithm~\ref{alg:matrix-ID} is done using the strongly rank-revealing QR (SRRQR) decomposition developed by \citet{gu1996}, then Algorithm~\ref{alg:matrix-ID} has complexity $O(I R^2)$ \citep{cheng2005}. Moreover, the decomposition it produces satisfies the following properties \citep{martinsson2011}:
\begin{enumerate}[(i)]
	\item Some subset of the columns of $\Pbf$ makes up the $K \times K$ identity matrix, \label{it:P-prop-1}
	\item no entry of $\Pbf$ has an absolute value exceeding 2,
	\item $\|\Pbf\| \leq \sqrt{4K(R-K) + 1}$,
	\item $\sigma_\text{min}(\Pbf) \geq 1$, \label{it:P-prop-4}
	\item $\Abf_{:\jbf}\Pbf = \Abf$ when $K = I$ or $K = R$, and
	\item $\|\Abf_{:\jbf}\Pbf - \Abf\| \leq \sigma_{K+1}(\Abf) \sqrt{4K(R-K) + 1}$ when $K < \min(I,R)$.
\end{enumerate}
\end{fact}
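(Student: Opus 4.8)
The plan is to reduce everything to the guarantees of the strongly rank-revealing QR (SRRQR) factorization of \citet{gu1996} and then push them through the algebra of Algorithm~\ref{alg:matrix-ID}. First I would record the SRRQR of $\Abf\Pibf$ in its full, untruncated form,
\begin{equation}
\Abf\Pibf = \Qbf\rowvec{\Rbf^{(11)} & \Rbf^{(12)} \\ \zerobf & \Rbf^{(22)}},
\end{equation}
with $\Qbf$ orthogonal, $\Rbf^{(11)} \in \Rb^{K\times K}$ upper triangular (hence invertible whenever $K \le \rank(\Abf)$), $\Qbf^{(1)} \defeq \Qbf_{:[K]}$, and $\Tbf \defeq (\Rbf^{(11)})^{-1}\Rbf^{(12)} \in \Rb^{K \times (R-K)}$. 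Instantiated with the tolerance parameter $f = 2$, \citet{gu1996} guarantees that (a) every entry of $\Tbf$ has absolute value at most $2$, and (b) $\sigma_j(\Rbf^{(22)}) \le \sigma_{K+j}(\Abf)\sqrt{1 + 4K(R-K)}$ for all $j$. Line~4 makes $\Abf_{:\jbf}$ equal to the first $K$ columns of $\Abf\Pibf$, i.e.\ $\Abf_{:\jbf} = \Qbf^{(1)}\Rbf^{(11)}$; since $\Rbf^{(1)} = \rowvec{\Rbf^{(11)} & \Rbf^{(12)}} = \Rbf^{(11)}\rowvec{\Ibf^{(K)} & \Tbf}$, line~6 yields $\Pbf = \rowvec{\Ibf^{(K)} & \Tbf}\Pibf^\top$, and hence $\Abf_{:\jbf}\Pbf = \Qbf^{(1)}\rowvec{\Rbf^{(11)} & \Rbf^{(12)}}\Pibf^\top$.

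Properties (i)--(iv) then come from the structure of $\Pbf = \rowvec{\Ibf^{(K)} & \Tbf}\Pibf^\top$. For (i), $\Pbf_{:\jbf} = \rowvec{\Ibf^{(K)} & \Tbf}\Pibf^\top\Pibf_{:[K]} = \rowvec{\Ibf^{(K)} & \Tbf}\rowvec{\Ibf^{(K)} \\ \zerobf} = \Ibf^{(K)}$. Property (ii) is immediate, since every entry of $\Pbf$ equals $0$, $1$, or an entry of $\Tbf$. For (iii) and (iv), orthogonality of $\Pibf$ gives $\Pbf\Pbf^\top = \Ibf^{(K)} + \Tbf\Tbf^\top$, so $\sigma_{\min}(\Pbf)^2 = 1 + \lambda_{\min}(\Tbf\Tbf^\top) \ge 1$ and $\|\Pbf\|^2 = 1 + \|\Tbf\|^2 \le 1 + \|\Tbf\|_\F^2 \le 1 + 4K(R-K)$, the last inequality because $\Tbf$ has $K(R-K)$ entries each of magnitude at most $2$.

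For (vi), I would subtract the block forms $\Abf = \Qbf\rowvec{\Rbf^{(11)} & \Rbf^{(12)} \\ \zerobf & \Rbf^{(22)}}\Pibf^\top$ and $\Abf_{:\jbf}\Pbf = \Qbf\rowvec{\Rbf^{(11)} & \Rbf^{(12)} \\ \zerobf & \zerobf}\Pibf^\top$ to get $\|\Abf - \Abf_{:\jbf}\Pbf\| = \|\Rbf^{(22)}\| = \sigma_1(\Rbf^{(22)}) \le \sigma_{K+1}(\Abf)\sqrt{1 + 4K(R-K)}$ by guarantee (b). Property (v) is the degenerate regime where $\Rbf^{(22)}$ is absent: if $K = R$ then $\Pbf = \Pibf^\top$ and $\Abf_{:\jbf} = \Abf\Pibf$, so $\Abf_{:\jbf}\Pbf = \Abf$; if $K = I$, the rank-$K$ QR of the $I$-row matrix $\Abf\Pibf$ is exact, so $\Abf\Pibf = \Abf_{:\jbf}\rowvec{\Ibf^{(K)} & \Tbf}$ and again $\Abf_{:\jbf}\Pbf = \Abf$. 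Finally, for the complexity, the SRRQR costs $O(IR^2)$ \citep{gu1996,cheng2005}, forming $\Tbf$ by triangular solves costs $O(K^2(R-K)) = O(IR^2)$, and the permutations and final assembly are lower order.

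The only genuinely nontrivial ingredient is the SRRQR existence-and-bounds theorem of \citet{gu1996} --- in particular, simultaneous control of the entries of $(\Rbf^{(11)})^{-1}\Rbf^{(12)}$ and of the singular values of $\Rbf^{(22)}$ --- which I would invoke verbatim as a black box; everything else is bookkeeping with orthogonal factors and $2\times 2$ block partitions. The two points to watch are matching constants ($\sqrt{4K(R-K)+1} = \sqrt{1+f^2K(R-K)}$ with $f=2$) and recording the standing hypothesis $K \le \min(I,R)$, together with $K \le \rank(\Abf)$ so that $\Rbf^{(11)}$ is invertible, under which all six items hold.
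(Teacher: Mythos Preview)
The paper does not prove Fact~\ref{fact:matrix-ID}; it is stated as a cited result (attributed to \citet{gu1996}, \citet{cheng2005}, and \citet{martinsson2011}) and invoked thereafter as a black box. Your derivation is correct and is exactly the standard argument one finds in those references: take the SRRQR guarantees of Gu--Eisenstat on the entries of $(\Rbf^{(11)})^{-1}\Rbf^{(12)}$ and on the singular values of $\Rbf^{(22)}$, and then read off (i)--(vi) from the block form $\Pbf = \rowvec{\Ibf^{(K)} & \Tbf}\Pibf^\top$.
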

In practice, using a variant of column pivoted QR instead of the SRRQR on line~3 of Algorithm~\ref{alg:matrix-ID} works just as well, and reduces the complexity of the algorithm to $O(KIR)$ \citep{cheng2005}. 

There have been subsequent proposals for randomized versions of matrix ID \citep{liberty2007}. \citet{martinsson2011} propose a variant which incorporates Gaussian random sketching. It computes a sketch $\Ybf = \Omegabf \Abf$, where $\Omegabf \in \Rb^{L \times I}$ ($K < L < I$) is a matrix with iid standard normal entries, and then computes an ID $\Ybf \approx \Ybf_{:\jbf} \Pbf$. The same $\jbf$ and $\Pbf$ then give an ID of $\Abf \approx \Abf_{:\jbf} \Pbf$. 
\citet{woolfe2008} propose a similar fast randomized algorithm which uses a subsampled randomized fast Fourier transform (SRFT) instead of a Gaussian matrix. It computes a sketch $\Ybf = \Sbf_\text{sub} \Fbf \Dbf \Abf$, where $\Dbf \in \Rb^{I \times I}$ is a diagonal matrix with each diagonal entry iid and equal to $+1$ or $-1$ with equal probability, $\Fbf \in \Rb^{I \times I}$ is the fast Fourier transform (FFT), and $\Sbf_\text{sub} \in \Rb^{L \times I}$ is a subsampling operator that randomly samples $L$ rows.

\subsubsection{Tensor Interpolative Decomposition} \label{sec:tensor-id}

\citet{biagioni2015} consider the problem of rank reduction of a CP tensor, which they call tensor ID. Suppose $\Xe \in \Rb^{I_1 \times I_2 \times \cdots \times I_N}$ is an $N$-way tensor with CP decomposition \eqref{eq:def-X-tensor}.
Computing a rank-$K$, $K < R$, tensor ID of $\Xe$ amounts to finding a representation
\begin{equation} \label{eq:X-star}
	\hat{\Xe} = \sum_{k=1}^K \hat{\lambda}_k \Xe^{(j_k)} \approx \Xe,
\end{equation}
where $\jbf \in [R]^K$ contains $K$ unique indices. Tensor ID has many applications. For example, in various algorithms, the rank of discretized separated representations of multivariate functions grows with each iteration, requiring repeated rank reduction of CP tensors \citep{beylkin2002, beylkin2006}. Another example is the algorithm by \citet{reynolds2017} for finding the element of maximum magnitude in a CP tensor which also requires repeated rank reduction.

\citet{biagioni2015} approach the tensor ID problem by considering the matrix
\begin{equation} \label{eq:M-matrix}
	\Mbf 
	= \rowvec{\lambda_1 \vec(\Xe^{(1)}) & \cdots & \lambda_R \vec(\Xe^{(R)})}
	= \bigg(\kr_{n=1}^N \Abf^{(n)}\bigg) \diag(\lambda_1,\ldots,\lambda_R),
\end{equation}
where $\diag(\lambda_1, \ldots, \lambda_R) \in \Rb^{R \times R}$ is a diagonal matrix with entries $\lambda_1,\ldots,\lambda_R$.  
The tensor ID problem can now be reduced to identifying columns of $\Mbf$ using matrix ID. However, when the factor matrices have no special structure, $\Mbf$ has $R \tilde{I}$ elements and is therefore typically infeasible to form. 
One way to tackle this problem is by forming the much smaller Gram matrix $\Mbf^\top \Mbf \in \Rb^{R \times R}$, which can be done using $O(R^2 \sum_{n} I_n)$ flops since
\begin{equation} \label{eq:MTM-formula}
	\Mbf^\top \Mbf = (\Abf^{(1)\top} \Abf^{(1)}) \circledast \cdots \circledast (\Abf^{(N)\top} \Abf^{(N)}),
\end{equation}
compute its symmetric matrix ID, and use it to compute an ID of $\Mbf$. This approach, however, can lead to accuracy issues since the Gram matrix can be ill-conditioned, since $\kappa(\Mbf^\top \Mbf) = \kappa^2(\Mbf)$ \citep{biagioni2015}. 
\citet{biagioni2015} therefore propose a randomized method which avoids the ill-conditioning issue and reduces the complexity. This is done by applying a kind of Gaussian sketch to $\Mbf$, but instead of forming a full Gaussian matrix of size $L \times \tilde{I}$, a matrix of the form
\begin{equation} \label{eq:G-matrix}
	\Omegabf = \bigg(\kr_{n=1}^N \Omegabf^{(n)}\bigg)^\top \in \Rb^{L \times \tilde{I}},
\end{equation}
is used, where each $\Omegabf^{(n)} \in \Rb^{I_n \times L}$ is a matrix with elements that are iid standard normal random variables. The sketch $\Ybf = \Omegabf \Mbf$ can then be computed efficiently without ever forming $\Omegabf$ or $\Mbf$, since $y_{lr} = \lambda_r \prod_{n=1}^N \langle \omegabf^{(n)}_{:l}, \abf^{(n)}_{:r} \rangle$. Note that the elements of $\Omegabf$ in \eqref{eq:G-matrix} are not independent. This means that the theory for Gaussian matrix ID, which requires independence, cannot be used to provide guarantees for sketched matrix ID using $\Omegabf$.

\subsection{Basics of \textsc{CountSketch}} \label{sec:basics-of-CS}

Our proposed method uses a type of sketching called \emph{\textsc{CountSketch}} \citep{charikar2004, clarkson2017}, which we now describe. Let $h : [I] \rightarrow [L]$ be a random map such that each $h(i)$ is iid and $(\forall i \in [I]) (\forall l \in [L])$ $\Pb(h(i) = l) = 1/L$, let $\Phibf \in \Rb^{L \times I}$ be a matrix with $\phi_{h(i)i} = 1$ and all other entries equal to 0, and let $\Dbf \in \Rb^{I \times I}$ be a diagonal matrix with each diagonal entry iid and equal to $+1$ or $-1$ with equal probability.
The \textsc{CountSketch} operator $\Sbf \in \Rb^{L \times I}$ is then defined as $\Sbf = \Phibf \Dbf$.
Applying $\Sbf$ to $\Abf \in \Rb^{I \times R}$ does the following: The matrix $\Dbf$ changes the sign of each row of $\Abf$ with probability $1/2$, and the matrix $\Phibf$ then randomly adds each row of $\Dbf \Abf$ to one of $L$ target rows. Due to the special structure of $\Sbf$, it can be applied implicitly with complexity $O(\nnz(\Abf))$ \citep{clarkson2017}.

Suppose $\Abf$ has the special structure $\Abf = \kr_{n=1}^N \Abf^{(n)} \in \Rb^{\tilde{I} \times R}$, where each $\Abf^{(n)} \in \Rb^{I_n \times R}$. For such matrices, there is a variant of \textsc{CountSketch} which allows computing the sketch of $\Abf$ without ever having to form the full matrix, which can be prohibitively large to store explicitly. This variant is called \textsc{TensorSketch} and is developed by \citet{pagh2013}, \citet{pham2013}, \citet{avron2014} and \citet{diao2018}. It works as follows: 
\begin{itemize}
	\item Define $n$ independent random maps $h_n : [I_n] \rightarrow [L]$ such that each $h(i)$ is iid and $(\forall i \in [I_n]) (\forall l \in [L])$ $\Pb(h_n(i) = l) = 1/L$; and
	\item define $n$ independent random sign functions $s_n : [I_n] \rightarrow \{+1, -1\}$ such that $(\forall i \in [I_n])$ $\Pb(s_n(i) = +1) =\Pb(s_n(i) = -1) = 1/2$.
\end{itemize}
Next, define $H : [I_1] \times [I_2] \times \cdots \times [I_N] \rightarrow [L]$ as
\begin{equation}
H(i_1, i_2, \ldots, i_N) \defeq \Big(\sum_{n=1}^N (h_n(i_n)-1) \mod L\Big) + 1,
\end{equation}
and $S : [I_1] \times [I_2] \times \cdots \times [I_N] \rightarrow \{+1,-1\}$ as
\begin{equation} \label{eq:def-S-function}
S(i_1, i_2, \ldots, i_N) \defeq \prod_{n=1}^N s_n(i_n).
\end{equation}
Notice that each row index of $\Abf$ corresponds to a unique $N$-tuple $(i_1,\ldots,i_N)$. $H$ and $S$ can therefore be considered functions on $[\tilde{I}]$. With this in mind, let $\Dbf_S \in \Rb^{\tilde{I} \times \tilde{I}}$ denote a diagonal matrix with the $i$th diagonal entry equal to $S(i)$. If $H$ and $\Dbf_S$ are used instead of $h$ and $\Dbf$ in the definition of \textsc{CountSketch} above, we get \textsc{TensorSketch}, which we will denote by $\Tbf \in \Rb^{L \times \tilde{I}}$. The reason for choosing this formulation is that it can be computed efficiently using the following formula:
\begin{equation} \label{eq:TA-formula}
\Tbf \Abf = \FFT^{-1} \Big( \startimes_{n=1}^N \FFT(\Sbf^{(n)} \Abf^{(n)}) \Big),
\end{equation}
where each $\Sbf^{(n)} \in \Rb^{L \times I_n}$ is a \textsc{CountSketch} operator defined using $h_n$ and the diagonal matrix $\diag(s_n(1), \ldots, s_n(I_n))$. The formula \eqref{eq:TA-formula} follows from the discussion in Section~A in the supplementary material of \citet{diao2018}. Other good sources for further details on \textsc{TensorSketch} are \citet{pagh2013}, \citet{pham2013} and \citet{avron2014}.

\section{Other Related Work} \label{sec:related-work}

We provided an overview of existing ID algorithms in Section~\ref{sec:id}. The matrix ID is related to the CX and CUR decompositions \citep{drineas2003, drineas2006a, drineas2008, mahoney2009, bien2010, wang2013, boutsidis2017}, also known as skeleton approximations \citep{goreinov1997, goreinov1997a, tyrtyshnikov2000},
and the column subset selection problem \citep{frieze2004, deshpande2006, deshpande2006a, boutsidis2009, deshpande2010, guruswami2012, boutsidis2014}. Like ID, the CX decomposition takes the form $\Abf \approx \Cbf \Xbf$, where $\Cbf$ contains a subset of the columns of $\Abf$. The crucial feature that distinguishes ID from a CX decomposition is the additional conditioning requirements on the coefficient matrix $\Pbf$ in ID; the matrix $\Xbf$ in a CX decomposition is not required to have the properties (i)--(iv) listed in Fact~\ref{fact:matrix-ID} \citep{drineas2008}. A CUR decomposition takes the form $\Abf \approx \Cbf \Ubf \Rbf$, where $\Cbf$ and $\Rbf$ contain a subset of the columns and rows of $\Abf$, respectively. Consequently, setting $\Xbf = \Ubf \Rbf$ would yield a CX decomposition. It is well-known that the matrix $\Xbf$ defined in this manner is typically ill-conditioned \citep{voronin2017a}. Since we require the coefficient matrix $\Pbf$ in our decomposition to be well-conditioned, the available algorithms for CX and CUR decomposition are not useful to us. 

Various randomized algorithms have been utilized in the context of tensor decomposition before. Examples include the works of \citet{wang2015}, \citet{battaglino2018}, and \citet{yang2018} for the CP decomposition; \citet{drineas2007}, \citet{tsourakakis2010}, \citet{dacosta2016} and \citet{malik2018} for the Tucker decomposition; and \citet{zhang2018} and \citet{tarzanagh2018} for t-product based decompositions. Other notable works that use CUR-type algorithms or sampling are e.g.\ those by \citet{mahoney2008}, \citet{caiafa2010}, \citet{oseledets2008} and \citet{friedland2011}. The tensor ID which we consider is different from the various problems solved in these previous papers. The goal of tensor ID is not to compute a tensor decomposition from an arbitrary data tensor. Instead, the purpose of tensor ID is to \emph{compress a tensor which is already in CP format} in an efficient and principled manner. To the best of our knowledge, the only work aside from that by \citet{biagioni2015} which considers randomized tensor ID is the paper by \citet{reynolds2016}. They introduce a randomized alternating least-squares (ALS) algorithm, which is better conditioned but slower than the standard ALS algorithm for CP decomposition. \citet{biagioni2015} conclude that standard ALS is much slower than their Gaussian sketching algorithm. We therefore do not compare our proposed tensor ID to the randomized ALS by \citet{reynolds2016} since it is even slower.

To the best of our knowledge, \textsc{TensorSketch} was the first sketch with theoretical guarantees that could be applied particularly efficiently to matrices like $\Mbf$ in \eqref{eq:M-matrix} with Kronecker structured columns. 
Recently, a number of works have appeared that provide guarantees for other methods designed for efficient sketching of Kronecker structured vectors. 
\citet{sun2018} consider sketches of the form \eqref{eq:G-matrix} where each $\Omegabf^{(n)}$ has sub-Gaussian entries. 
They provide Johnson--Lindenstrauss (JL) style guarantees for the case when the sketch is a Khatri--Rao product of two smaller matrices, i.e., for $N=2$ in \eqref{eq:G-matrix}. 
\citet{rakhshan2020} consider sketches which have tensor train or CP tensor structure, and with core tensors and factor matrices that have Gaussian entries. 
They provide JL style guarantees for these sketches for arbitrary orders of the random tensors. 
Their sketch with CP tensor structure includes the sketch in \eqref{eq:G-matrix} as a special case. 
Another line of work considers the Kronecker fast JL transform, which is a structured variant of the fast JL transform of \citet{ailon2009}. 
It was first proposed by \citet{battaglino2018} with theoretical guarantees later provided by \citet{jin2019} and \citet{malik2020}. 
A variant of this transform is also considered by \citet{iwen2019}.

\section{Fast Randomized Matrix ID Using CountSketch} \label{sec:proposed-matrix-id}

Algorithm~\ref{alg:CS-matrix-ID} explains our proposal for \textsc{CountSketch} matrix ID. Proposition~\ref{prop:CS-matrix-ID} provides guarantees for the method. A proof is provided in Section~\ref{sec:proof-1}, which also contains a more detailed version of the bound in \eqref{eq:prop-1-bound}.

\begin{algorithm}[htb!]
	\caption{\textsc{CountSketch} matrix ID (proposal)}
	\label{alg:CS-matrix-ID}
	\begin{algorithmic}[1]
		\STATE {\bfseries Input:} $\Abf \in \Rb^{I \times R}$, target rank $K$, sketch dimension~$L$
		\STATE {\bfseries Output:} $\Pbf \in \Rb^{K \times R}$, $\jbf \in [R]^{K}$ 
		\STATE {Draw \textsc{CountSketch} matrix $\Sbf \in \Rb^{L \times I}$}
		\STATE {Compute sketch $\Ybf = \Sbf \Abf \in \Rb^{L \times R}$ implicitly}
		\STATE {Compute $[\Pbf, \jbf] = \text{Matrix ID}(\Ybf, K)$ using Algorithm~\ref{alg:matrix-ID}}
	\end{algorithmic}
\end{algorithm}

\begin{proposition}[\textsc{CountSketch} matrix ID] \label{prop:CS-matrix-ID}
	Suppose $I$, $R$ and $K < R$ are defined as in Algorithm~\ref{alg:CS-matrix-ID}. Let $\beta > 1$ be a real number and $L$ a positive integer such that
	\begin{equation} \label{eq:prop-1-cond}
		2 \beta (K^2 + K) \leq L < I.
	\end{equation}
	Suppose that the matrix ID on line 5 of Algorithm~\ref{alg:CS-matrix-ID} utilizes SRRQR. Then, the output $\Pbf$ of Algorithm~\ref{alg:CS-matrix-ID} satisfies properties (\ref{it:P-prop-1})--(\ref{it:P-prop-4}) in Fact~\ref{fact:matrix-ID}. Moreover, the outputs $[\Pbf,\jbf]$ satisfy 
	\begin{equation} \label{eq:prop-1-bound}
		\|\Abf_{:\jbf} \Pbf - \Abf\| \lesssim 2 \sigma_{K+1}(\Abf) \sqrt{KIR}
	\end{equation}
	with probability at least $1 - \frac{1}{\beta}$.
\end{proposition}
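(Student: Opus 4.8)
The plan is to reduce the analysis of Algorithm~\ref{alg:CS-matrix-ID} to Fact~\ref{fact:matrix-ID} applied to the sketched matrix $\Ybf = \Sbf\Abf$, plus a perturbation argument that transfers the error bound back to $\Abf$. First I would note that properties (\ref{it:P-prop-1})--(\ref{it:P-prop-4}) of $\Pbf$ are immediate: they are intrinsic to the SRRQR-based matrix ID (Fact~\ref{fact:matrix-ID}) and depend only on $\Ybf$ having at least $K$ columns, not on anything about $\Sbf$, so they hold deterministically. The substance is the error bound \eqref{eq:prop-1-bound}. By property (vi) of Fact~\ref{fact:matrix-ID} applied to $\Ybf$, we get $\|\Ybf_{:\jbf}\Pbf - \Ybf\| \leq \sigma_{K+1}(\Ybf)\sqrt{4K(L-K)+1}$. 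The output $\Pbf,\jbf$ are those of the sketched problem, so to control $\|\Abf_{:\jbf}\Pbf - \Abf\|$ I would write $\Abf_{:\jbf}\Pbf - \Abf = (\Abf_{:\jbf}\Pbf - \Abf) $ and relate its spectral norm to that of $\Sbf(\Abf_{:\jbf}\Pbf - \Abf) = \Ybf_{:\jbf}\Pbf - \Ybf$ using the fact that $\Sbf$, restricted to an appropriate low-dimensional subspace, is an approximate isometry. Specifically, the column space of $\Abf_{:\jbf}\Pbf - \Abf$ lies in $\range(\Abf)$, which has dimension at most $R$; so it suffices that $\Sbf$ acts as a subspace embedding on $\range(\Abf)$.

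The two quantitative inputs I need are therefore: (a) a lower bound $\sigma_{\min}(\Sbf\Vbf) \gtrsim 1$ where $\Vbf$ is an orthonormal basis for $\range(\Abf)$, which would give $\|\Abf_{:\jbf}\Pbf-\Abf\| \lesssim \|\Ybf_{:\jbf}\Pbf - \Ybf\|$; and (b) an upper bound $\sigma_{K+1}(\Ybf) \lesssim \sigma_{K+1}(\Abf)$, again coming from $\Sbf$ not distorting singular values too much. Both follow from the standard \textsc{CountSketch} oblivious subspace embedding guarantee: for a $d$-dimensional subspace, $\Sbf$ with $L = \Omega(d^2/\delta\epsilon^2)$ rows satisfies $(1-\epsilon)\|\xbf\| \leq \|\Sbf\xbf\| \leq (1+\epsilon)\|\xbf\|$ for all $\xbf$ in the subspace, with probability $1-\delta$ (Clarkson--Woodruff; cited in the paper as \citet{clarkson2017}). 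Here the relevant subspace is $\range(\Abf)$, of dimension at most $R$ — but that would need $L = \Omega(R^2)$, which is not what the hypothesis \eqref{eq:prop-1-cond} provides; \eqref{eq:prop-1-cond} only gives $L \gtrsim K^2$. So the embedding must instead be applied to a carefully chosen $O(K)$-dimensional subspace. The natural choice: let $\Ubf_K$ span the top-$K$ left singular subspace of $\Abf$ and let $\abf^\star$ be the (at most) one extra direction needed to express the residual error direction, i.e., work in $\Span(\range(\Abf_{:\jbf}) \cup \{\text{error vector}\})$, or more cleanly embed the $(K+1)$-dimensional space $\range(\Ubf_{K+1})$ on which singular values $\sigma_1,\dots,\sigma_{K+1}$ of $\Abf$ live, together with whatever is needed to bound $\sigma_{\min}$ of the relevant blocks. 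Getting $\sigma_{K+1}(\Ybf) \lesssim \sigma_{K+1}(\Abf)$ from an embedding is the place to be careful: one writes $\Abf = \Abf_K + \Abf_{>K}$ with $\Abf_K$ the best rank-$K$ approximation, and $\sigma_{K+1}(\Sbf\Abf) \leq \|\Sbf\Abf - \Sbf\Abf_K\| = \|\Sbf\Abf_{>K}\|$; bounding $\|\Sbf\Abf_{>K}\|$ by $O(1)\cdot\|\Abf_{>K}\| = O(1)\sigma_{K+1}(\Abf)$ needs an embedding or at least a spectral-norm bound on $\Sbf$ restricted to $\range(\Abf_{>K})$, which again is $R$-dimensional — so instead I would use that \textsc{CountSketch} has bounded expected Frobenius distortion, $\Eb\|\Sbf\Mbf\|_\F^2 = \|\Mbf\|_\F^2$, giving $\|\Sbf\Abf_{>K}\| \leq \|\Sbf\Abf_{>K}\|_\F \lesssim \|\Abf_{>K}\|_\F \leq \sqrt{R-K}\,\sigma_{K+1}(\Abf)$ with constant probability via Markov. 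This is exactly where the extra $\sqrt{R}$ and $\sqrt{I}$ factors in \eqref{eq:prop-1-bound} relative to Fact~\ref{fact:matrix-ID}(vi) come from: one $\sqrt{L-K} \approx \sqrt{L}$ from the sketched ID bound (and $L < I$), one $\sqrt{R}$ from the crude Frobenius bound on the tail.

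So the steps, in order: (1) invoke Fact~\ref{fact:matrix-ID} on $\Ybf$ to get properties (i)--(iv) for free and the bound $\|\Ybf_{:\jbf}\Pbf - \Ybf\| \leq \sigma_{K+1}(\Ybf)\sqrt{4K(L-K)+1}$; (2) choose the $O(K)$-dimensional subspace — the span of the top-$K$ right (or left) singular vectors of $\Abf$ together with the single direction that carries the worst-case error — and apply the \textsc{CountSketch} subspace-embedding theorem with $\epsilon = \Theta(1)$, $\delta = \Theta(1/\beta)$, whose row requirement is $L \gtrsim (K+1)^2\beta$, matching \eqref{eq:prop-1-cond}; (3) on the good event, lower-bound $\sigma_{\min}(\Sbf|_{\text{subspace}}) \geq 1/2$ to pass from the sketched-residual norm to $\|\Abf_{:\jbf}\Pbf - \Abf\|$; (4) separately bound $\sigma_{K+1}(\Ybf) \leq \|\Sbf\Abf_{>K}\|_\F \lesssim \sqrt{R-K}\,\sigma_{K+1}(\Abf)$ using the Frobenius-isometry-in-expectation property and Markov; (5) combine and use $L < I$ to get $\sqrt{4K(L-K)+1} \lesssim \sqrt{KI}$, yielding $\|\Abf_{:\jbf}\Pbf - \Abf\| \lesssim 2\sigma_{K+1}(\Abf)\sqrt{KIR}$; (6) union-bound the two bad events (embedding failure, Markov failure), each $O(1/\beta)$, to get overall probability $\geq 1 - 1/\beta$ after adjusting constants. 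The main obstacle is step (2)--(3): making precise which low-dimensional subspace suffices so that a single embedding event simultaneously controls $\sigma_{\min}$ on the error side \emph{and} does not require embedding all of $\range(\Abf)$ (which would blow up the $L$ requirement to $R^2$). The cleanest route is probably to fix $\jbf$ and $\Pbf$ as random variables, condition on the embedding event for the fixed $(K+1)$-dimensional top singular subspace of $\Abf$ (which is deterministic, so no circularity), show that on this event the sketched ID's error direction must itself lie close to that subspace up to the tail $\Abf_{>K}$, and absorb the tail via step (4); alternatively, one can follow the structure-preserving argument of \citet{martinsson2011}/\citet{woolfe2008} verbatim, replacing their Gaussian/SRFT subspace-embedding lemma with the \textsc{CountSketch} one, since those proofs are written precisely to need only an $O(K)$-dimensional embedding plus a tail bound.
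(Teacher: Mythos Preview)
Your fallback suggestion at the very end --- ``follow the structure-preserving argument of \citet{martinsson2011}/\citet{woolfe2008} verbatim, replacing their Gaussian/SRFT subspace-embedding lemma with the \textsc{CountSketch} one'' --- is precisely what the paper does. Your primary approach, however, has the gap you yourself flag as ``the main obstacle,'' and your sketched workarounds do not close it. The inequality $\|\Abf_{:\jbf}\Pbf-\Abf\|\lesssim\|\Ybf_{:\jbf}\Pbf-\Ybf\|$ via a lower subspace-embedding bound would require $\Sbf$ to be an embedding on a subspace containing the columns of $\Abf_{:\jbf}\Pbf-\Abf$; since the columns of $\Abf$ (not $\Abf_{:\jbf}$) contribute, that subspace has dimension up to $\rank(\Abf)$, not $O(K)$. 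Your proposal to ``show that on this event the sketched ID's error direction must itself lie close to that subspace up to the tail'' is not obviously true and is not made precise.

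The paper sidesteps this entirely with Woolfe's device (Fact~\ref{fact:2} here): it constructs an approximate left inverse $\Xbf=\Ubf\begin{bmatrix}\Gbf^{(-1)}\\ \zerobf\end{bmatrix}$ from the \emph{fixed} top-$K$ left singular subspace $\Ubf^{(1)}$ of $\Abf$, and uses the triangle-type inequality
\[
\|\Abf_{:\jbf}\Pbf-\Abf\|\le\|\Xbf\Sbf\Abf-\Abf\|(\|\Pbf\|+1)+\|\Xbf\|\,\|\Ybf_{:\jbf}\Pbf-\Ybf\|.
\]
The only probabilistic event needed is the embedding of the deterministic $K$-dimensional subspace $\range(\Ubf^{(1)})$, which is exactly what the condition $L\ge 2\beta(K^2+K)$ buys (Lemmas~\ref{lemma:1}--\ref{lemma:3}). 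No union bound with a second Markov event is used.

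Two smaller corrections. First, Fact~\ref{fact:matrix-ID}(vi) applied to $\Ybf\in\Rb^{L\times R}$ gives $\sqrt{4K(R-K)+1}$, not $\sqrt{4K(L-K)+1}$; the factor depends on the number of \emph{columns}. Second, the $\sqrt{I}$ in \eqref{eq:prop-1-bound} does not come from $L<I$ as you suggest in step (5); it comes from the deterministic bound $\|\Sbf\|\le\sqrt{I}$ (Lemma~\ref{lemma:4}), which enters both through $\sigma_{K+1}(\Ybf)\le\|\Sbf\|\sigma_{K+1}(\Abf)$ (Fact~\ref{fact:3}) and through the tail term $\|\Zbf^{(2)}\|\le\|\Sbf\|$ in Lemma~\ref{lemma:4.5}. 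Your Frobenius-plus-Markov route for $\sigma_{K+1}(\Ybf)$ is unnecessary here.
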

The condition in \eqref{eq:prop-1-cond} is very similar to that for SRFT matrix ID by \citet{woolfe2008}. The only difference is that instead of a term of the form $(K^2 + K)$, their work only has a factor $K^2$. In practice, the condition in \eqref{eq:prop-1-cond} is very conservative. We find that a small oversampling factor, e.g.\ $L = K + 10$, works well in practice, producing errors of the same size as the other randomized ID methods.
\begin{remark} \label{remark:adversarial-matrices}
	The semi-coherent matrices defined by \citet{avron2010} are adversarial to CountSketch, and therefore to our proposed method. For such matrices, using $L=K+10$ may result in a large error for our method. Some care is therefore necessary when applying our method together this choice of $L$. In Section~\ref{sec:matrix-experiment-2}, we do extensive testing of our method on real-world matrices to demonstrate that using $L=K+10$ works well in practice. We also provide an example of a matrix with semi-coherent structure on which our method fails when choosing $L$ like this.
\end{remark}
\begin{remark} \label{remark:modified-h}	
	In cases when the target rank $K$ is quite large (e.g.\ $K = R/2$), an issue we encountered is that $\Sbf \Abf$ can be rank deficient due to rank deficiency of $\Sbf$. This issue can be dealt with easily by defining $\Sbf$ slightly differently to ensure that each row contains at least one nonzero element. This is done by the following straightforward modification of the map $h$ in the definition of \textsc{CountSketch} in Section~\ref{sec:basics-of-CS}: Let each $h(i) = v_i$, where $\vbf \in \Rb^{I}$ is a uniform random permutation of the elements of the vector $[1, \cdots, L, x_{L+1}, \cdots, x_{I}]$, where each $x_i \in [L]$ is iid uniformly random. With this modification, the guarantees of Proposition~\ref{prop:CS-matrix-ID} still hold. In fact, the condition in \eqref{eq:prop-1-cond} is slightly improved. We give a precise statement with proof in Section~\ref{sec:proof-2}.
\end{remark}

\section{Extending the Results to Tensor ID} \label{sec:proposed-tensor-id}

Let $\Xe$ and $\hat{\Xe}$ be defined as in \eqref{eq:def-X-tensor} and \eqref{eq:X-star}, respectively. Our approach to the tensor ID problem is similar to that of \citet{biagioni2015}: We sketch the matrix $\Mbf$ in \eqref{eq:M-matrix} without forming it and compute a matrix ID of this sketch. The approximation $\hat{\Xe}$ is then constructed using the rank-1 components of $\Xe$ corresponding to the columns of $\Mbf$ used in the ID of that matrix. The s-values $\hat{\lambda}_1,\ldots,\hat{\lambda}_K$ used in the representation of $\hat{\Xe}$ are then computed as $\hat{\lambda}_k = \lambda_{j_k} \sum_{r=1}^R p_{kr}$, for $k \in [K]$.
The sketch we use is the efficient \textsc{TensorSketch} variant of \textsc{CountSketch}. Algorithm~\ref{alg:CS-tensor-ID} outlines our proposed method for tensor ID. Proposition~\ref{prop:CS-tensor-ID} provides guarantees for the method. A proof is provided in Section~\ref{sec:proof-3}. To the best of our knowledge, there are no previous results like Proposition~\ref{prop:CS-tensor-ID} for randomized tensor ID.
\begin{algorithm}[htb!]
	\caption{\textsc{TensorSketch} tensor ID (proposal)}
	\label{alg:CS-tensor-ID}
	\begin{algorithmic}[1]
		\STATE {\bfseries Input:} CP tensor $\Xe \in \Rb^{I_1 \times I_2 \times \cdots \times I_N}$, target rank $K$, sketch dimension~$L$
		\STATE {\bfseries Output:} rank-$K$ approximation $\hat{\Xe}$
		\STATE {Draw \textsc{TensorSketch} operator $\Tbf \in \Rb^{L \times \tilde{I}}$}
		\STATE {Define $\Mbf$ implicitly as in \eqref{eq:M-matrix}}
		\STATE {Compute sketch $\Ybf = \Tbf \Mbf \in \Rb^{L \times R}$ using \eqref{eq:TA-formula}}
		\STATE {Compute $[\Pbf, \jbf] = \text{Matrix ID}(\Ybf, K)$ using Algorithm~\ref{alg:matrix-ID}}
		\STATE {Compute $\hat{\lambda}_k = \lambda_{j_k} \sum_{r=1}^R p_{kr}$ for $k \in [K]$}
		\STATE {Define CP tensor $\hat{\Xe}$ as in \eqref{eq:X-star}}
	\end{algorithmic}
\end{algorithm}
\begin{proposition}[\textsc{TensorSketch} tensor ID] \label{prop:CS-tensor-ID}
	Suppose $I_1, \ldots, I_N$, $R$ and $K < R$ are defined as in Algorithm~\ref{alg:CS-tensor-ID}. Let $\beta > 1$ be a real number and $L$ a positive integer such that $2 (2 + 3^N) \beta K^2 \leq L < \tilde{I}$.
	Suppose that the matrix ID on line~6 of Algorithm~\ref{alg:CS-tensor-ID} utilizes SRRQR. Then, the output of Algorithm~\ref{alg:CS-tensor-ID} satisfies 
	\begin{equation}
		\|\hat{\Xe} - \Xe\|_\F \lesssim 2 \sigma_{K+1}(\Mbf) R \sqrt{K R \tilde{I}}
	\end{equation}
	with probability at least $1 - \frac{1}{\beta}$.
\end{proposition}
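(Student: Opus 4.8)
The plan is to reduce the tensor approximation error to a spectral-norm matrix interpolative decomposition error for $\Mbf$, and then to prove a \textsc{TensorSketch} analogue of Proposition~\ref{prop:CS-matrix-ID} for $\Mbf$. For the reduction, recall from \eqref{eq:M-matrix} that $\Mbf_{:j_k} = \lambda_{j_k}\vec(\Xe^{(j_k)})$, so the choice $\hat{\lambda}_k = \lambda_{j_k}\sum_{r=1}^R p_{kr}$ on line~7 of Algorithm~\ref{alg:CS-tensor-ID} gives $\hat{\lambda}_k\vec(\Xe^{(j_k)}) = \big(\sum_{r} p_{kr}\big)\Mbf_{:j_k}$, hence $\vec(\hat{\Xe}) = \Mbf_{:\jbf}\Pbf\mathbf{1}$ while $\vec(\Xe) = \Mbf\mathbf{1}$, where $\mathbf{1}\in\Rb^R$ is the all-ones vector. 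Writing $\Zbf \defeq \Mbf_{:\jbf}\Pbf - \Mbf$ and applying the triangle inequality over the $R$ columns of $\Zbf$,
\begin{equation*}
\|\hat{\Xe} - \Xe\|_\F = \|\Zbf\mathbf{1}\|_2 = \Big\|\sum_{r=1}^R \Zbf_{:r}\Big\|_2 \le \sum_{r=1}^R \|\Zbf_{:r}\|_2 \le R\,\|\Zbf\|,
\end{equation*}
so it suffices to show $\|\Mbf_{:\jbf}\Pbf - \Mbf\| \lesssim 2\sigma_{K+1}(\Mbf)\sqrt{KR\tilde{I}}$ with probability at least $1-1/\beta$.

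For the second step I would mimic the proof of Proposition~\ref{prop:CS-matrix-ID} in Section~\ref{sec:proof-1}, replacing $I$ by $\tilde{I}$ and the \textsc{CountSketch} $\Sbf$ by the \textsc{TensorSketch} $\Tbf$. Let $\Mbf = \Ubf\Sigmabf\Vbf^\top$ be a singular value decomposition, let $\Mbf_K$ be a best rank-$K$ approximation of $\Mbf$, and split
\begin{equation*}
\Mbf_{:\jbf}\Pbf - \Mbf = \underbrace{\big((\Mbf_K)_{:\jbf}\Pbf - \Mbf_K\big)}_{=:\,\Wbf} + \big((\Mbf - \Mbf_K)_{:\jbf}\Pbf - (\Mbf - \Mbf_K)\big).
\end{equation*}
The second term has norm at most $(\|\Pbf\|+1)\sigma_{K+1}(\Mbf) \le \big(\sqrt{4K(R-K)+1}+1\big)\sigma_{K+1}(\Mbf)$ by property~(iii) of Fact~\ref{fact:matrix-ID}. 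For $\Wbf$, note that its columns lie in $\range(\Ubf_{:[K]})$ and that, with $\Ybf = \Tbf\Mbf$,
\begin{equation*}
\Tbf\Wbf = \big(\Ybf_{:\jbf}\Pbf - \Ybf\big) - \big((\Tbf(\Mbf - \Mbf_K))_{:\jbf}\Pbf - \Tbf(\Mbf - \Mbf_K)\big),
\end{equation*}
where the first bracket is controlled by property~(vi) of Fact~\ref{fact:matrix-ID} applied to the ID of $\Ybf$, together with $\sigma_{K+1}(\Ybf) \le \|\Tbf(\Mbf - \Mbf_K)\| \le \|\Tbf\|\,\sigma_{K+1}(\Mbf)$, and the second bracket is bounded as above up to an extra factor $\|\Tbf\|$. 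Two ingredients then close the argument. First, a deterministic bound $\|\Tbf\| \le \sqrt{\tilde{I}}$: each column of $\Tbf$ is $\pm$ a standard basis vector, so $\Tbf\Tbf^\top$ is diagonal with entries equal to the bucket loads, whence $\|\Tbf\|^2$ is the maximum load, which is at most $\tilde{I}$. Second, the oblivious subspace embedding property of \textsc{TensorSketch}: for the fixed $K$-dimensional subspace $\range(\Ubf_{:[K]})$ one has $\sigma_{\min}(\Tbf\Ubf_{:[K]}) \ge c$ for some absolute constant $c\in(0,1)$ with probability at least $1-1/\beta$, provided $2(2+3^N)\beta K^2 \le L$; this is precisely where the $3^N$ factor (absent from the \textsc{CountSketch} condition~\eqref{eq:prop-1-cond}) enters, and it follows from the known second-moment analysis of \textsc{TensorSketch} \citep{avron2014, diao2018}. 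On that event $\|\Wbf\| \le c^{-1}\|\Tbf\Wbf\|$, and assembling the pieces gives $\|\Mbf_{:\jbf}\Pbf - \Mbf\| \lesssim c^{-1}\|\Tbf\|\,\sigma_{K+1}(\Mbf)\sqrt{KR} \lesssim 2\sigma_{K+1}(\Mbf)\sqrt{KR\tilde{I}}$; combining with the reduction of the first step yields the claim.

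I expect the one genuine obstacle to be the subspace embedding ingredient: one must verify that \textsc{TensorSketch}, whose sketching matrix does \emph{not} have independent entries (unlike the Gaussian or SRFT constructions), still acts as a near-isometry on a fixed $K$-dimensional subspace for the stated sketch length. This is the technical heart of the argument, but it can be quoted from the \textsc{TensorSketch} literature; converting it into the required lower bound on $\sigma_{\min}(\Tbf\Ubf_{:[K]})$ and tracking the constants so that the hypothesis reads $2(2+3^N)\beta K^2 \le L$ is routine. Everything else — the column-wise reduction, the triangle-inequality split, the deterministic bound $\|\Tbf\| \le \sqrt{\tilde{I}}$, and the two applications of Fact~\ref{fact:matrix-ID} — is elementary and parallels the matrix case.
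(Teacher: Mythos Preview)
Your proposal is correct, and its overall architecture matches the paper's: first reduce the tensor Frobenius error to $R\,\|\Mbf_{:\jbf}\Pbf-\Mbf\|$, then prove a \textsc{TensorSketch} version of the matrix ID bound. Your reduction via $\vec(\hat{\Xe})=\Mbf_{:\jbf}\Pbf\mathbf{1}$ and the column-wise triangle inequality is the same estimate the paper obtains (there via Cauchy--Schwarz on the sum over $r$ followed by $\|\cdot\|_\F\le\sqrt{R}\,\|\cdot\|$); both arrive at exactly $R\,\|\Mbf_{:\jbf}\Pbf-\Mbf\|$.

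Where you differ is in how the matrix ID bound for $\Tbf$ is assembled. The paper does not split $\Mbf=\Mbf_K+(\Mbf-\Mbf_K)$; instead it re-runs the Woolfe-style machinery of Section~\ref{sec:proof-1} verbatim, invoking Fact~\ref{fact:2} with the auxiliary approximate left inverse $\Xbf=\Ubf\bigl[\begin{smallmatrix}\Gbf^{(-1)}\\ \zerobf\end{smallmatrix}\bigr]$ built from $(\Tbf\Ubf_{:[K]})^\dagger$. The only lemma that changes is Lemma~\ref{lemma:2}: its conclusion $\|\Ebf\|\le 1-1/\alpha$ is re-derived for \textsc{TensorSketch} directly from Lemma~B.1 of \citet{diao2018}, which is precisely the subspace-embedding input you quote and where the $(2+3^N)$ factor enters. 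Your route--- the rank-$K$ split plus $\|\Wbf\|\le\sigma_{\min}(\Tbf\Ubf_{:[K]})^{-1}\|\Tbf\Wbf\|$---is a cleaner, more ``oblivious subspace embedding'' way of packaging the same three ingredients (the Diao et~al.\ embedding bound, the deterministic $\|\Tbf\|\le\sqrt{\tilde I}$, and the properties of $\Pbf$ from Fact~\ref{fact:matrix-ID}), and it avoids constructing $\Xbf$ explicitly. The paper's route has the complementary advantage that it literally recycles Lemmas~\ref{lemma:1}, \ref{lemma:3}, \ref{lemma:4}, \ref{lemma:4.5} and the final assembly from the \textsc{CountSketch} case, so only one lemma needs to be modified. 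Either way the constants and the condition $2(2+3^N)\beta K^2\le L$ fall out identically once one sets $\alpha=4$.
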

As mentioned in Section~\ref{sec:tensor-id}, an issue with forming and then decomposing $\Mbf^\top \Mbf$ is that it can be ill-conditioned. \citet{biagioni2015} point out that the sketched matrix $\Omegabf \Mbf$ typically is much better conditioned since $\kappa(\Omegabf \Mbf) \leq \kappa(\Omegabf) \kappa(\Mbf)$ and Gaussian matrices are well-conditioned. 
As Proposition~\ref{prop:CS-well-cond} demonstrates, the matrix $\Tbf \Mbf$ is also well-conditioned with high probability, when the sketch dimension $L$ is sufficiently large. 
A proof of Proposition~\ref{prop:CS-well-cond} is provided in Section~\ref{sec:proof-4}.
\begin{proposition} \label{prop:CS-well-cond}
	Let $\beta > 1$ be a real number, and let $L, R$ and $I_1, \ldots, I_N$ be positive integers such that $2 (2 + 3^N) \beta R^2 \leq L$.
	Suppose $\Tbf \in \Rb^{L \times \tilde{I}}$ is a \textsc{TensorSketch} matrix, and $\Mbf \in \Rb^{\tilde{I} \times R}$ is an arbitrary matrix. Then ${\kappa(\Tbf \Mbf) \leq 7 \kappa(\Mbf)}$
	with probability at least $1 - \frac{1}{\beta}$.
\end{proposition}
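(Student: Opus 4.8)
The plan is to show that, under the stated lower bound on $L$, the operator $\Tbf$ is a near-isometry on the (at most $R$-dimensional) column space of $\Mbf$, and then to transfer this to $\Tbf\Mbf$ by elementary singular-value inequalities. First I would dispose of the degenerate case: if $\rank(\Mbf) < R$, then $\sigma_{\min}(\Mbf) = 0$ and likewise $\sigma_{\min}(\Tbf\Mbf) = 0$ (its column space has dimension at most $\rank(\Mbf) < R$), so both sides of the claimed inequality are $+\infty$ and there is nothing to prove. So assume $\rank(\Mbf) = R$; let $\Ubf \in \Rb^{\tilde{I} \times R}$ have orthonormal columns with $\range(\Ubf) = \range(\Mbf)$, and write $\Mbf = \Ubf\Cbf$ with $\Cbf \defeq \Ubf^\top\Mbf \in \Rb^{R \times R}$. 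Since $\Ubf$ has orthonormal columns, $\|\Mbf\xbf\|_2 = \|\Cbf\xbf\|_2$ for all $\xbf \in \Rb^R$, so $\Cbf$ has exactly the same singular values as $\Mbf$; in particular $\Cbf$ is invertible and $\kappa(\Cbf) = \kappa(\Mbf)$, and moreover $\Tbf\Mbf = (\Tbf\Ubf)\Cbf$.

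The only probabilistic input is the second-moment estimate for \textsc{TensorSketch}: for arbitrary fixed $\xbf, \ybf \in \Rb^{\tilde{I}}$,
\begin{equation*}
\Eb\big[\big(\langle \Tbf\xbf, \Tbf\ybf\rangle - \langle \xbf, \ybf\rangle\big)^2\big] \;\le\; \frac{2+3^N}{L}\,\|\xbf\|_2^2\,\|\ybf\|_2^2 ,
\end{equation*}
which is precisely the estimate underlying the subspace-embedding results of \citet{avron2014} (see also \citet{diao2018}). Summing this over all pairs of columns of $\Ubf$ and using $\Ubf^\top\Ubf = \Ibf^{(R)}$ gives $\Eb\|\Ubf^\top\Tbf^\top\Tbf\Ubf - \Ibf^{(R)}\|_\F^2 \le \frac{2+3^N}{L}\|\Ubf\|_\F^4 = \frac{(2+3^N)R^2}{L}$. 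By Markov's inequality and the hypothesis $L \ge 2(2+3^N)\beta R^2$, the event $\Gc \defeq \{\,\|\Ubf^\top\Tbf^\top\Tbf\Ubf - \Ibf^{(R)}\|_\F^2 \le 1/2\,\}$ has probability at least $1 - 1/\beta$; and since the spectral norm is dominated by the Frobenius norm, on $\Gc$ we have $\|\Ubf^\top\Tbf^\top\Tbf\Ubf - \Ibf^{(R)}\| \le 1/\sqrt{2} < 1$.

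To finish, I would argue on $\Gc$ as follows. There the eigenvalues of $\Ubf^\top\Tbf^\top\Tbf\Ubf$, which are the squared singular values of $\Tbf\Ubf$, lie in $[\,1 - 1/\sqrt{2},\; 1 + 1/\sqrt{2}\,]$; hence $\Tbf\Ubf$ has full column rank and $\kappa(\Tbf\Ubf) \le \sqrt{(1 + 1/\sqrt{2})/(1 - 1/\sqrt{2})} = 1 + \sqrt{2}$. Since $\Tbf\Ubf$ has full column rank and $\Cbf$ is invertible, $\sigma_{\max}((\Tbf\Ubf)\Cbf) \le \sigma_{\max}(\Tbf\Ubf)\,\sigma_{\max}(\Cbf)$ and $\sigma_{\min}((\Tbf\Ubf)\Cbf) \ge \sigma_{\min}(\Tbf\Ubf)\,\sigma_{\min}(\Cbf)$, so
\begin{equation*}
\kappa(\Tbf\Mbf) = \kappa\big((\Tbf\Ubf)\Cbf\big) \le \kappa(\Tbf\Ubf)\,\kappa(\Cbf) = \kappa(\Tbf\Ubf)\,\kappa(\Mbf) \le (1 + \sqrt{2})\,\kappa(\Mbf) \le 7\,\kappa(\Mbf),
\end{equation*}
which is the claim (and holds on $\Gc$, an event of probability at least $1 - 1/\beta$).

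The one genuinely substantive ingredient is the \textsc{TensorSketch} second-moment bound with the $2 + 3^N$ constant; the rest is deterministic linear algebra. If a reference stating exactly this constant is inconvenient, I would reprove it directly: take the expectation over the independent hash maps $h_n$ and sign maps $s_n$, expand $\langle \Tbf\xbf, \Tbf\ybf\rangle$ as a double sum over multi-indices, and group terms according to which of the $N$ modes collide --- the $3^N$ factor is exactly the number of per-mode collision patterns. The only minor points to verify are that the Markov threshold $1/2$ is licensed by the stated lower bound on $L$ (it is, since $(2+3^N)R^2/L \le 1/(2\beta)$) and that $1 + \sqrt{2} \le 7$, which shows the advertised bound holds with room to spare.
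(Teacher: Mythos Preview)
Your proof is correct and follows essentially the same approach as the paper's: both use the \textsc{TensorSketch} second-moment bound (with the $2+3^N$ constant) together with Markov's inequality to show that $\Tbf$ approximately preserves norms on the column space of $\Mbf$, and then deduce the condition-number bound. The paper black-boxes this step by citing the subspace-embedding theorem of \citet{diao2018} and applies it directly to $\Mbf$ with a parameter $\alpha=4$ to get the constant $2\alpha-1=7$, whereas you make the argument explicit via the factorization $\Mbf = \Ubf\Cbf$ and actually obtain the tighter constant $1+\sqrt{2}$; but the substance is the same.
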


\section{Complexity Analysis} \label{sec:complexity}

In this section, we compare the complexity of our proposed methods with the other algorithms. We assume all QR factorizations are done using column pivoted QR instead of SRRQR, and ignore the cost of generating random variables. We also assume that $L = K + C$ where $C$ is a small positive integer (e.g.\ $L = K + 10$) since this choice works well in practice. Since $K < R$, and we assume $L = K + C$ for a small constant $C$, we also make the assumption $L < R$. 

The costs of the different steps of Algorithm~\ref{alg:CS-matrix-ID} are as follows:
\begin{itemize}
	\item Computing the sketch $\Ybf = \Sbf \Abf$: $O(\nnz(\Abf))$.
	\item Computing Matrix ID of $\Ybf \in \Rb^{L \times R}$, where $L < R$: $O(L^2 R)$. 
\end{itemize}
The total cost is therefore $O(\nnz(\Abf) + K^2 R)$. The cost of standard matrix ID can be found in Remark~3 of \citet{cheng2005}, and the cost of SRFT matrix ID can be found in Remark~5.4 of \citet{woolfe2008}. The cost of Gaussian matrix ID is straightforward to compute similarly to our computation above. Table~\ref{tab:matrix-ID-complexity} summarize these matrix ID complexities.

For the tensor ID algorithms, we assume the input is an $N$-way rank-$R$ CP tensor of size $I \times \cdots \times I$, and that each factor matrix has the same number of nonzeros, which we denote by $\nnz(\Abf)$.
The costs of the different steps of Algorithm~\ref{alg:CS-tensor-ID} are as follows:
\begin{itemize}
	\item Computing the \textsc{TensorSketched} matrix $\Ybf$: $O(N(\nnz(\Abf) + R L \log L))$.
	\item Computing Matrix ID of $\Ybf \in \Rb^{L \times R}$, where $L < R$: $O(L^2 R)$.
	\item Computing $\hat{\lambda}_1, \ldots, \hat{\lambda}_K$: $O(RK)$. 
\end{itemize}
The total cost is therefore $O(N (\nnz(\Abf) + RK \log K) + K^2 R)$. Although \citet{biagioni2015} do not specify these, the complexities for the Gram matrix approach and Gaussian tensor ID can be computed from the descriptions in their paper. Table~\ref{tab:tensor-ID-complexity} summarize the complexities for the different tensor ID algorithms. The constant $C_\text{mult}$ is the cost of computing one Gram matrix $\Abf^{(n)\top} \Abf^{(n)}$ in \eqref{eq:MTM-formula}, which we assume is the same for each $n$, e.g.\ $C_\text{mult} = I R^2$ if the factor matrices were dense. 

\begin{table}[htb!]
	\caption{Comparison of the complexity for matrix ID algorithms.\label{tab:matrix-ID-complexity}}
	\vskip 0.05 in
	\centering
	\begin{tabular}{ll} 
		\toprule
		Algorithm for matrix ID & Complexity \\
		\midrule
		Standard (Alg.~\ref{alg:matrix-ID}) & $KIR$ \\
		Gaussian & $K \nnz(\Abf) + K^2 R$ \\
		SRFT & $I R \log(K) + K^2 R$ \\
		\textsc{CountSketch} (Proposal, Alg.~\ref{alg:CS-matrix-ID}) & $\nnz(\Abf) + K^2 R$ \\
		\bottomrule
	\end{tabular}
\end{table}

\begin{table}[htb!]
	\caption{Comparison of the complexity for tensor ID algorithms. \label{tab:tensor-ID-complexity}}
	\vskip 0.05 in
	\centering
	\begin{tabular}{ll} 
		\toprule
		Algorithm for tensor ID & Complexity \\
		\midrule
		Gram matrix & $N C_{\text{mult}}+ R^3$ \\
		Gaussian & $N K \nnz(\Abf) + K^2 R$ \\
		\textsc{CountSketch} (Proposal, Alg.~\ref{alg:CS-tensor-ID}) & $N(\nnz(\Abf) + R K \log K) + K^2 R$ \\
		\bottomrule
	\end{tabular}
\end{table}

\section{Numerical Experiments} \label{sec:numerical-experiments}

The numerical experiments are done in Matlab R2018b and C. All results are averages over ten runs in an environment using four cores of an Intel Xeon E5-2680 v3 @2.50GHz CPU and 19 GB of RAM. All code used to generate our results can be found at \giturl, including implementations of our proposed methods. For all randomized methods, we use an oversampling parameter equal to 10 (i.e., $L = K+10$ in Algorithms~\ref{alg:CS-matrix-ID} and \ref{alg:CS-tensor-ID}).

\subsection{Matrix ID Experiments} \label{sec:matrix-id-experiments}

We compare the four methods in Table~\ref{tab:matrix-ID-complexity}. For standard matrix ID, we use the implementation in RSVDPACK\footnote{Available at \\\url{https://github.com/sergeyvoronin/LowRankMatrixDecompositionCodes}.}. For the remaining methods, we use our own Matlab implementations which utilize Matlab's column pivoted QR function. RSVDPACK only supports dense matrices. Moreover, since it is challenging to efficiently construct partial QR decompositions of sparse matrices, we did not attempt to write our own implementation of standard matrix ID for sparse matrices; see Section~11.1.8 of \citet{golub2013} for a discussion about the challenges of sparse QR. We therefore have to convert each sparse input matrix to dense format before applying standard matrix ID from RSVDPACK. Similarly, it is challenging to implement an efficient algorithm for FFT for sparse matrices, or for the accelerated FFT by \citet{woolfe2008}. We therefore also have to convert the input matrix to dense format before applying standard FFT in our implementation of SRFT matrix ID. However, by only sketching a subset of columns of the input matrix at a time, we can avoid having to convert all columns of the matrix to dense format at the same time. In the experiments, we use the modification described in Remark~\ref{remark:modified-h} when implementing our proposed \textsc{CountSketch} matrix ID.
 
Computing the spectral norm of the matrices we consider is not feasible due to their size. Therefore, when computing the error for each matrix decomposition, we utilize the randomized scheme for estimating the spectral norm suggested in Section~3.4 of \citet{woolfe2008}. Letting $E$ be the true error in spectral norm, our estimates $\tilde{E}$ satisfy the following properties: $\tilde{E} \leq E$, and $\Pb(\tilde{E} \geq E / 100) = 1-q$ where $0 < q \ll 1$. In other words, the estimate is smaller than the true spectral norm, but it is unlikely to be much smaller (with ``much smaller'' meaning more than two orders of magnitude smaller). This is good enough for our purposes, since we are primarily interested in comparing the performance of the different methods rather than establishing the exact errors. In the first experiment, $q < \text{2e\textminus2}$, and in the second $q < \text{2e\textminus5}$. 

\subsubsection{Experiment 1: Synthetic Matrices} \label{sec:matrix-experiment-1}

We generate sparse matrices $\Abf \in \Rb^{I \times R}$ with $R = \text{1e+4}$, and density $\nnz(\Abf)/(IR) \approx 0.5\%$. We use different values of $I \in [\text{1e+4}, \text{1e+6}]$. The matrices have a true rank of $2K$, where $K = \text{1e+3}$. Similarly to experiments by \citet{martinsson2011}, we let $\sigma_{i}(\Abf)$, $i \in [K]$, decay exponentially to $10^{-8}$, and then remain constant at $\sigma_i(\Abf) \approx 10^{-8}$, $i \in [K+1:2K]$. 

The results for the first experiment are presented in Figure~\ref{fig:matrix-ID-1}. Standard matrix ID encountered memory issues when $I \geq \text{5e+4}$. For the matrix sizes the standard method could handle, it was more accurate but much slower than the randomized methods. The accuracy of all randomized methods is comparable. Our proposed \textsc{CountSketch} matrix ID is the fastest, achieving a speed-up of about $18 \times$ and $12 \times$ when $I = \text{1e+6}$ compared to Gaussian and SRFT ID, respectively.

\begin{figure}[ht!]
	\centering
	\includegraphics[width=1\columnwidth]{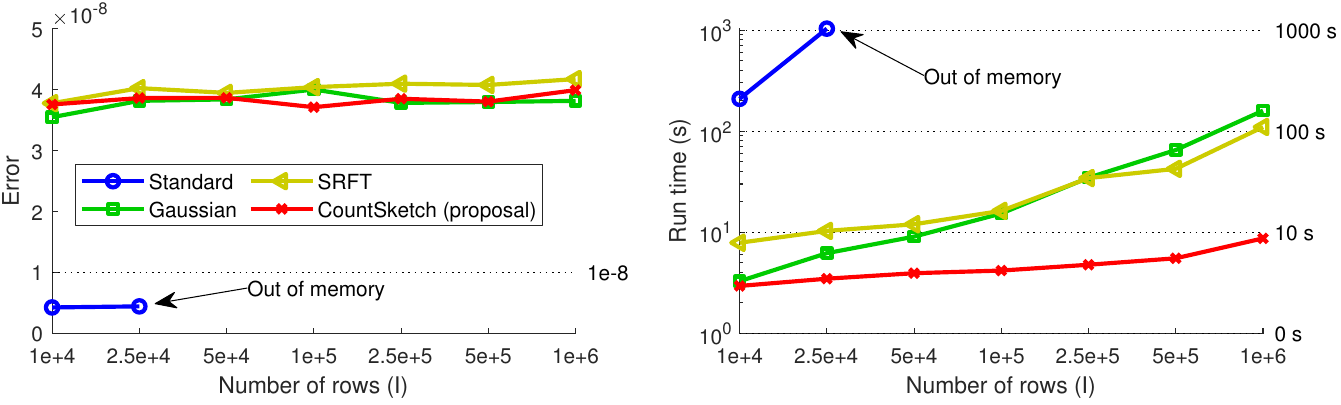}
	\caption{Errors (left) and run times (right) in the synthetic matrix ID experiment. The errors are computed using the randomized spectral norm by \citet{woolfe2008}.}
	\label{fig:matrix-ID-1}
\end{figure}

\subsubsection{Experiment 2: Real-World Matrices} \label{sec:matrix-experiment-2}

We decompose a sparse matrix which comes from a computer vision problem and is part of the SuiteSparse Matrix Collection\footnote{The matrix can be downloaded from \url{https://sparse.tamu.edu/Brogan/specular}.}. The matrix is of size 477,976 by 1,600, contains 7,647,040 nonzero elements, and has a rank of 1,442. We set the target rank to $K = \text{1,442}$. Ideally, the methods should be able to produce decompositions with a very small error. We only attempt this with the three randomized methods, since the matrix is too large for standard matrix ID. Table~\ref{tab:matrix-ID-2-results} shows the result. All methods produce good approximations with a small error. Our proposed \textsc{CountSketch} matrix ID method is much faster than the other algorithms, achieving a speed-up of about $35 \times$ and $31 \times$ compared to Gaussian and SRFT matrix ID, respectively.

\begin{table}[htb!]
	\caption{Errors and run times in the real-world matrix ID experiment. The errors are computed using the randomized spectral norm by \citet{woolfe2008}.\label{tab:matrix-ID-2-results}}
	\centering
	\begin{tabular}{llr} 
		\toprule
		Algorithm for matrix ID & Error & Run time (s) \\
		\midrule
		Gaussian 		& 1.505e\textminus15 & 20.38 \\
		SRFT 			& 1.507e\textminus15 & 18.40 \\
		\textsc{CountSketch} (proposal) 	& 1.504e\textminus15 & 0.59 \\
		\bottomrule
	\end{tabular}
\end{table}

To further support our claim that $L = K + 10$ works well in practice, we have done additional experiments. We consider 20 matrices from the SuiteSparse Matrix Collection\footnote{They are \texttt{landmark}, \texttt{Franz7}, \texttt{ch7-8-b2}, \texttt{ch7-9-b2}, \texttt{ch8-8-b2}, \texttt{mk12-b2}, \texttt{shar\_te2-b1}, \texttt{rel7}, \texttt{relat7b}, \texttt{relat7}, \texttt{abtaha2}, \texttt{abtaha1}, \texttt{specular}, \texttt{photogrammetry2}, \texttt{GL7d12}, \texttt{ch7-6-b2}, \texttt{ch7-7-b2}, \texttt{cis-n4c6-b3}, \texttt{mk11-b2}, \texttt{n4c6-b3}.} , and 3 different target ranks (10\%, 50\% and 90\% of the number of columns). The matrices are of different sizes and come from different application areas. We compare the performance of Gaussian, SRFT and \textsc{CountSketch} (proposed method) matrix ID, all with $L = K + 10$, repeating each experiment 10 times and reporting averages. The results are in Table~\ref{tab:matrix-ID-2-results-2}; on average, our method is \emph{the most accurate} even compared to the Gaussian method which it outperforms in 34 of the 60 tests. Our method outperforms the SRFT method in 57 of the 60 tests. Out of the 26 cases when the Gaussian method outperforms our method, the difference is no more than 7\% in 25 of those cases, and 31\% in one case. Out of the 58 cases when the Gaussian method outperforms the SRFT method, the difference is no more than 14\% in 56 of those cases, and 36\%--45\% in two cases. 
\begin{table}[htb!]
	\caption{Number of experiments out of 60 for which method A is more accurate than method B.\label{tab:matrix-ID-2-results-2}}
	\centering
	\begin{tabular}{lccc}  
		\toprule
		& \multicolumn{3}{c}{Method B} \\
		\cmidrule(r){2-4}
		Method A  		& Our Proposal & SRFT & Gaussian \\
		\midrule
		Our Proposal	& - 	& 57 	& 34 \\
		SRFT       		& 3 	& - 	& 2 \\
		Gaussian   		& 26 	& 58	& - \\
		\bottomrule
	\end{tabular}
\end{table}

As mentioned in Remark~\ref{remark:adversarial-matrices}, semi-coherent matrices are adversarial to \text{CountSketch} and our proposed method. The matrix \verb|soc-sign-bitcoin-otc| in the SuiteSparse Matrix Collection, 
which is the adjacency matrix of a graph, is a concrete example of when choosing $L=K+10$ results in a large error for our method. The semi-coherent structure of this matrix can be revealed by rearranging it so that rows and columns corresponding to nodes in the same strongly connected components of the graph are adjacent in the matrix. Some care is therefore necessary when applying our method together with the rule of thumb $L=K+10$.

\subsection{Tensor ID Experiments}

We compare the three methods in Table~\ref{tab:tensor-ID-complexity}.
We have implemented all methods ourselves in Matlab and C. 

\subsubsection{Experiment 1: Synthetic Tensors} \label{sec:tensor-experiment-1}

We generate sparse $5$-way tensors $\Xe \in \Rb^{I \times \cdots \times I}$ using \eqref{eq:def-X-tensor}, where each factor matrix column $\abf_{:r}^{(n)}$ is a random sparse vector with a density of 1\%, and we use different values of $I \in [\text{1e+3}, \text{1e+5}]$. The number of rank-1 terms is $R = \text{10,000}$, and we use a target rank of 1,000. The values of $\lambda_r$ in \eqref{eq:def-X-tensor} are defined as $\lambda_r \defeq 10^{-\frac{r-1}{R} 8}$ for $r \in [1000]$, and $\lambda_r \defeq 10^{-8}$ for $r \in [1001:R]$. 
The results for the experiment are presented in Figure~\ref{fig:tensor-ID-1}. Gaussian and \textsc{CountSketch} tensor ID achieve similar accuracy. Although the Gram matrix approach has a better accuracy here, it can have issues reaching an error below the square root of machine precision due to poor conditioning; see the example in Section~5.1.1 of \citet{biagioni2015}. Our proposed method is much faster than both other methods for the larger tensors, achieving a speed-up of $46 \times$ over the Gram matrix approach (for $I = \text{2.5e+4}$) and $14 \times$ over Gaussian tensor ID (for $I = \text{1e+5}$).
\begin{figure}[ht!]
	\centering
	\includegraphics[width=1\columnwidth]{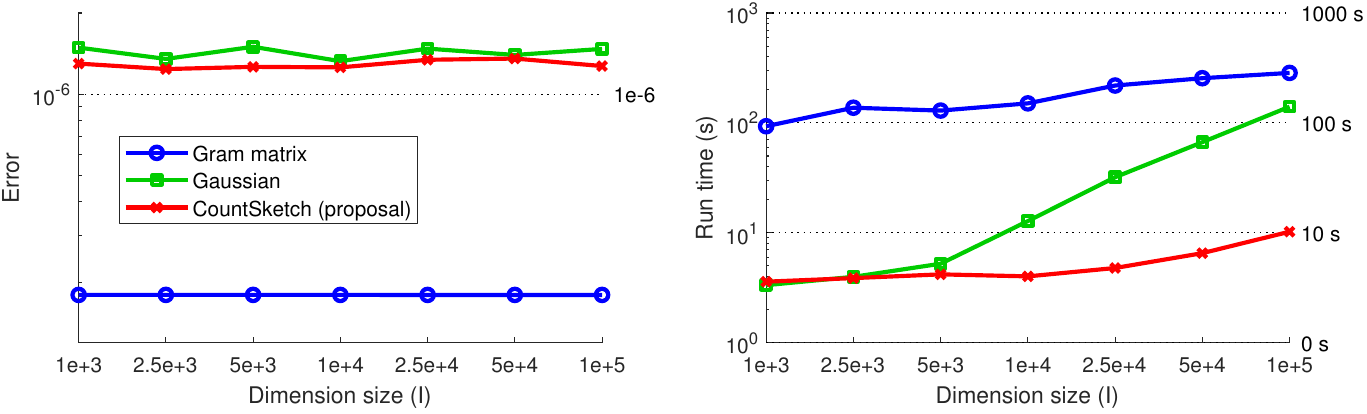}
	\caption{Errors (left) and run times (right) in the synthetic tensor ID experiment. The errors are in Frobenius norm.}
	\label{fig:tensor-ID-1}
\end{figure}

\subsubsection{Experiment 2: Real-World Tensor}

The purpose of this experiment is to show how tensor ID can be useful in a data analysis task. We implement Algorithm~2 by \citet{reynolds2017}, which requires repeated rank reduction, and use it to find the maximum magnitude element in a CP tensor which comes from decomposing streamed data. The rank reduction step is done using tensor ID. The data we consider is a decomposed version of the Enron data set\footnote{The data set is available at \url{http://frostt.io/tensors/enron}.} of size $\text{6,066} \times \text{5,699} \times \text{244,268} \times \text{1,176}$. The Enron data set keeps track of email correspondence between employees at Enron, and the four modes represent sender, receiver, keyword and date. The decomposition has rank 100, and was constructed using the streamed version of SPLATT\footnote{The streamed version of SPLATT is available at \url{https://github.com/ShadenSmith/splatt-stream}.} \citep{smith2018}, 
with the data streamed along the fourth mode (time). As suggested in the documentation of SPLATT-stream, we apply an additional Frobenius norm regularizer with regularization coefficient 1e\textminus2 to the mode-4 factor matrix. We threshold the factor matrices outputted by SPLATT-stream by first normalizing them so that each column have unit 2-norm (the normalization constant is absorbed into the s-values) and then setting all elements with magnitude less than 1e\textminus6 to zero. The relative error introduced by this thresholding is less than 2e\textminus5.

Unlike the previous experiments, the matrices being sketched in this experiment have many rows containing only zeros. We therefore could speed up Gaussian tensor ID by only generating those columns of the Gaussian sketch matrices which are actually multiplied by nonzero elements. We used this improved version of Gaussian tensor ID in the experiment for a more fair comparison. The same modification does not yield a speed-up of Gaussian matrix or tensor ID in the previous experiments since there most rows of the matrices being sketched contain nonzero elements.

Finding the maximum magnitude element using a brute force approach would require computing every nonzero element in the tensor, which would be costly. Using the algorithm by \citet{reynolds2017} together with our \textsc{CountSketch} tensor ID, we find the maximum in 11 seconds. The sketching portion of the algorithm takes $2.6 \times$ more time if Gaussian tensor ID is used instead. We do not compare with the Gram matrix approach since it takes very long to run. With the results in the previous subsection in mind, we believe the speed-up would be more substantial for higher rank tensors. For all ten trials, and both when using \textsc{CountSketch} and Gaussian tensor ID for rank reduction, the same position for the maximum magnitude element is identified each time.

\section{Proofs}

\subsection{Proof of Proposition~\ref{prop:CS-matrix-ID}} \label{sec:proof-1}

Our proof of Proposition~\ref{prop:CS-matrix-ID} is an adaption of the proof for SRFT matrix ID provided by \citet{woolfe2008}. We show that their arguments hold when a \textsc{CountSketch} matrix is used for sketching instead of an SRFT matrix. Although much of our proof is identical to that provided by \citet{woolfe2008}, we choose to include it in detail. The reason for doing this is that the proofs of Propositions~\ref{prop:CS-tensor-ID} and \ref{prop:modified-h} rely on adapting the proof in the present section. Having a detailed proof here therefore makes those subsequent proofs easier to follow.

The following facts will be useful in the proof.
\begin{fact}[Lemma~3.7 in \cite{martinsson2011}] \label{fact:1}
	Let $I$ and $R$ be positive integers with $I \geq R$. Suppose $\Abf \in \Rb^{I \times R}$ is a matrix such that $\Abf^\top \Abf$ is invertible. Then
	\begin{equation}
		\|(\Abf^\top \Abf)^{-1} \Abf^\top\| = \frac{1}{\sigma_R(\Abf)}.
	\end{equation}
\end{fact}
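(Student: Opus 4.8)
The plan is to reduce the identity to a statement about singular values by invoking the thin singular value decomposition of $\Abf$. Write $\Abf = \Ubf \Sigmabf \Vbf^\top$, where $\Ubf \in \Rb^{I \times R}$ has orthonormal columns ($\Ubf^\top \Ubf = \Ibf^{(R)}$), $\Vbf \in \Rb^{R \times R}$ is orthogonal, and $\Sigmabf = \diag(\sigma_1(\Abf), \ldots, \sigma_R(\Abf)) \in \Rb^{R \times R}$. The hypothesis that $\Abf^\top \Abf$ is invertible is precisely the statement $\sigma_R(\Abf) > 0$, so $\Sigmabf$ is invertible and $\Sigmabf^{-1} = \diag(1/\sigma_1(\Abf), \ldots, 1/\sigma_R(\Abf))$ makes sense.

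Next I would substitute this factorization into the quantity of interest. Since $\Abf^\top \Abf = \Vbf \Sigmabf^2 \Vbf^\top$ and $\Vbf$ is orthogonal, we get $(\Abf^\top \Abf)^{-1} = \Vbf \Sigmabf^{-2} \Vbf^\top$, hence
\begin{equation}
(\Abf^\top \Abf)^{-1} \Abf^\top = \Vbf \Sigmabf^{-2} \Vbf^\top \Vbf \Sigmabf \Ubf^\top = \Vbf \Sigmabf^{-1} \Ubf^\top .
\end{equation}
Because $\Vbf$ and $\Ubf$ both have orthonormal columns, the right-hand side exhibits $(\Abf^\top\Abf)^{-1}\Abf^\top$ as a matrix whose nonzero singular values are exactly the diagonal entries of $\Sigmabf^{-1}$ (in some order). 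The spectral norm is the largest of these, namely $\max_i 1/\sigma_i(\Abf) = 1/\sigma_R(\Abf)$, which gives $\norm{(\Abf^\top\Abf)^{-1}\Abf^\top} = 1/\sigma_R(\Abf)$ as claimed. (Equivalently, one may observe that when $\Abf$ has full column rank, $(\Abf^\top\Abf)^{-1}\Abf^\top$ is the Moore–Penrose pseudoinverse $\Abf^+$, whose norm is the standard $1/\sigma_{\text{min}}(\Abf)$; or argue norm-squared directly via $\norm{M}^2 = \norm{MM^\top}$ with $M = (\Abf^\top\Abf)^{-1}\Abf^\top$, since $MM^\top = (\Abf^\top\Abf)^{-1}$ and $\norm{(\Abf^\top\Abf)^{-1}} = 1/\sigma_R(\Abf)^2$.)

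There is no genuine obstacle in this argument; it is a short exercise. The only points that warrant a little care are that the orthonormality of $\Ubf$ holds columnwise but not rowwise (so $\Ubf^\top\Ubf = \Ibf^{(R)}$ while $\Ubf\Ubf^\top$ need not be the identity on $\Rb^I$), and that invertibility of $\Abf^\top\Abf$ — i.e. $\sigma_R(\Abf) > 0$ — is exactly what is needed to legitimize forming $\Sigmabf^{-1}$ and to ensure $\sigma_R(\Abf)$ appears in the denominator rather than a division by zero.
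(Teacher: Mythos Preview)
Your argument is correct and is the standard way to establish this identity. Note, however, that the paper does not actually prove this statement: it is recorded as a \emph{Fact} with a citation to Lemma~3.7 of \citet{martinsson2011}, so there is no in-paper proof to compare against. Your thin-SVD derivation (or either of the alternatives you mention via the Moore--Penrose pseudoinverse or via $\norm{M}^2 = \norm{MM^\top}$) is exactly what one would expect the cited source to contain.
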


\begin{fact}[Lemma~3.7 in \cite{woolfe2008}] \label{fact:2}
	Let $K$, $L$, $I$ and $R$ be positive integers such that $K \leq R$. Suppose $\Abf \in \Rb^{I \times R}$, $\Bbf \in \Rb^{I \times K}$ is a matrix whose columns constitute a subset of the columns of $\Abf$, $\Pbf \in \Rb^{K \times R}$, $\Xbf \in \Rb^{I \times L}$, and $\Sbf \in \Rb^{L \times I}$. Then
	\begin{equation}
		\|\Bbf \Pbf - \Abf\| \leq \|\Xbf \Sbf \Abf - \Abf\| (\|\Pbf\| + 1) + \|\Xbf\| \|\Sbf \Bbf \Pbf - \Sbf \Abf\|.
	\end{equation}
\end{fact}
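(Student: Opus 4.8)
The plan is to obtain the bound by a telescoping insertion of the sketch operator $\Xbf\Sbf$. First I would record the algebraic identity
\begin{equation}
\Bbf\Pbf - \Abf = (\Ibf^{(I)} - \Xbf\Sbf)\Bbf\Pbf \;+\; \Xbf(\Sbf\Bbf\Pbf - \Sbf\Abf) \;+\; (\Xbf\Sbf\Abf - \Abf),
\end{equation}
in which the two intermediate terms $\pm\,\Xbf\Sbf\Bbf\Pbf$ cancel, and then apply the triangle inequality together with submultiplicativity of the spectral norm to get
\begin{equation}
\norm{\Bbf\Pbf - \Abf} \le \norm{(\Ibf^{(I)} - \Xbf\Sbf)\Bbf}\,\norm{\Pbf} + \norm{\Xbf}\,\norm{\Sbf\Bbf\Pbf - \Sbf\Abf} + \norm{\Xbf\Sbf\Abf - \Abf}.
\end{equation}

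The only step that genuinely uses the hypotheses is bounding $\norm{(\Ibf^{(I)} - \Xbf\Sbf)\Bbf}$. Here I would invoke that the columns of $\Bbf$ form a subset of those of $\Abf$: writing $\Bbf = \Abf\Ebf$, where the columns of $\Ebf \in \Rb^{R \times K}$ are distinct standard basis vectors and hence $\norm{\Ebf} = 1$, we get
\begin{equation}
\norm{(\Ibf^{(I)} - \Xbf\Sbf)\Bbf} = \norm{(\Ibf^{(I)} - \Xbf\Sbf)\Abf\Ebf} \le \norm{(\Ibf^{(I)} - \Xbf\Sbf)\Abf} = \norm{\Xbf\Sbf\Abf - \Abf};
\end{equation}
that is, passing from $\Abf$ to a column submatrix cannot increase the spectral norm.

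Substituting this into the previous display and combining the two resulting copies of $\norm{\Xbf\Sbf\Abf - \Abf}$ yields exactly
\begin{equation}
\norm{\Bbf\Pbf - \Abf} \le \norm{\Xbf\Sbf\Abf - \Abf}\,(\norm{\Pbf} + 1) + \norm{\Xbf}\,\norm{\Sbf\Bbf\Pbf - \Sbf\Abf},
\end{equation}
which is the claim. I do not expect a real obstacle: the argument is essentially one well-chosen decomposition, and the only point requiring a moment's care is spotting which grouping of terms to insert and then applying the ``column submatrix has no larger spectral norm'' observation — which is precisely where the assumption that $\Bbf$'s columns come from $\Abf$ is used.
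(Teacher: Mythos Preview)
Your argument is correct. The paper does not give its own proof of this statement; it records it as a fact quoted from \citet{woolfe2008} and uses it as a black box in the proof of Proposition~\ref{prop:CS-matrix-ID}. Your telescoping decomposition together with the observation that $\Bbf = \Abf\Ebf$ for a column-selection matrix $\Ebf$ with $\norm{\Ebf}=1$ is precisely the standard way to establish the inequality, and every step checks out.
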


\begin{fact}[Lemma~3.9 in \cite{martinsson2011}] \label{fact:3}
	Let $L$, $I$ and $R$ be positive integers. Suppose $\Abf \in \Rb^{I \times R}$, and $\Sbf \in \Rb^{L \times I}$. Then $\sigma_{j}(\Sbf \Abf) \leq \|\Sbf\| \sigma_{j}(\Abf)$ for all $j \in [\min(L, I, R)]$.
\end{fact}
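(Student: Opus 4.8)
The plan is to reduce the claim to the elementary submultiplicativity bound $\norm{\Sbf \Abf \xbf} \le \norm{\Sbf}\, \norm{\Abf \xbf}$, which holds for every $\xbf \in \Rb^R$, and then feed this pointwise inequality into the Courant--Fischer min--max characterization of singular values. Recall that for any matrix $\Mbf \in \Rb^{m \times n}$ and any index $j \le \min(m,n)$,
\begin{equation*}
	\sigma_j(\Mbf) = \min_{\substack{W \subseteq \Rb^n \\ \dim W = n-j+1}} \ \max_{\xbf \in W,\, \norm{\xbf} = 1} \norm{\Mbf \xbf}.
\end{equation*}
Both $\Sbf \Abf \in \Rb^{L \times R}$ and $\Abf \in \Rb^{I \times R}$ act on $\Rb^R$, so I would apply this formula with $n = R$ to each of them; the hypothesis $j \in [\min(L,I,R)]$ ensures that $\sigma_j$ is defined for both matrices and that subspaces of dimension $R - j + 1 \ge 1$ exist.

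The main step is then immediate: fix $j$ and an arbitrary subspace $W \subseteq \Rb^R$ with $\dim W = R - j + 1$. For every unit vector $\xbf \in W$ the pointwise bound gives $\norm{\Sbf \Abf \xbf} \le \norm{\Sbf}\, \norm{\Abf \xbf}$, hence
\begin{equation*}
	\max_{\xbf \in W,\, \norm{\xbf} = 1} \norm{\Sbf \Abf \xbf} \ \le\ \norm{\Sbf} \max_{\xbf \in W,\, \norm{\xbf} = 1} \norm{\Abf \xbf}.
\end{equation*}
Taking the minimum over all such $W$ on both sides, and using the displayed min--max formula for $\Sbf \Abf$ on the left and for $\Abf$ on the right, yields $\sigma_j(\Sbf \Abf) \le \norm{\Sbf}\, \sigma_j(\Abf)$, which is the claim. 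An equivalent route, if one prefers to avoid min--max, is the Eckart--Young--Mirsky characterization $\sigma_j(\Mbf) = \min\{ \norm{\Mbf - \mathbf{N}} : \rank(\mathbf{N}) \le j-1 \}$: take a best rank-$(j-1)$ approximation $\mathbf{N}$ of $\Abf$, note that $\Sbf \mathbf{N}$ has rank at most $j-1$, and bound $\sigma_j(\Sbf \Abf) \le \norm{\Sbf \Abf - \Sbf \mathbf{N}} \le \norm{\Sbf}\, \norm{\Abf - \mathbf{N}} = \norm{\Sbf}\, \sigma_j(\Abf)$.

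There is essentially no genuine obstacle here: this is a routine and well-known inequality describing how left multiplication perturbs singular values. The only thing that calls for a little care is the index bookkeeping --- keeping $R - j + 1 \ge 1$ so that the subspaces in the min--max formula exist, and keeping $j$ within the range where $\sigma_j$ is meaningful for both $\Sbf \Abf$ and $\Abf$ --- all of which is delivered by the assumption $j \in [\min(L,I,R)]$.
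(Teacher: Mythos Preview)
Your argument is correct: both the Courant--Fischer route and the Eckart--Young--Mirsky alternative are valid, standard proofs of this inequality, and your index bookkeeping is fine. The paper itself does not prove this statement --- it is quoted as a known fact with a citation to \citet{martinsson2011} --- so there is no in-paper proof to compare against; your proposal simply supplies what the paper takes for granted.
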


Fact~\ref{fact:atkinson-han} is a special case of a more general statement in \citet{atkinson2009}.
\begin{fact}[Theorem~2.3.1 of \citet{atkinson2009}] \label{fact:atkinson-han}
	Let $\Lbf \in \Rb^{K \times K}$ be a matrix and assume $\|\Lbf\| < 1$. Then $(\Ibf^{(K)} - \Lbf)$ is invertible and
	\begin{equation}
		\|(\Ibf^{(K)} - \Lbf)^{-1}\| \leq \frac{1}{1 - \|\Lbf\|}.
	\end{equation}
\end{fact}

Lemma~\ref{lemma:1} is an adaption of Lemma~4.2 by \citet{woolfe2008}.
\begin{lemma} \label{lemma:1}
	Let $K$, $L$ and $I$ be positive integers such that $K \leq I$. Suppose $\Sbf = \Phibf \Dbf \in \Rb^{L \times I}$ is a \textsc{CountSketch} matrix, and $\Ubf \in \Rb^{I \times K}$ is a matrix with orthonormal columns. Define $\Cbf \in \Rb^{K \times K}$ as
	\begin{equation}
		\Cbf \defeq (\Sbf \Ubf)^\top (\Sbf \Ubf),
	\end{equation}
	and define $\Ebf \in \Rb^{K \times K}$ elementwise as
	\begin{equation} \label{eq:def-E}
		e_{k k'} \defeq \sum_{\substack{i, i' \in [I]\\i \neq i'}} d_{ii} d_{i'i'} u_{ik} u_{i'k'} \Big( \sum_{l \in [L]} \phi_{li} \phi_{li'} \Big).
	\end{equation}
	Then $\Cbf = \Ibf^{(K)} + \Ebf$.
\end{lemma}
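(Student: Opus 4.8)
The plan is to compute $\Cbf = (\Sbf\Ubf)^\top(\Sbf\Ubf)$ directly from the definition $\Sbf = \Phibf\Dbf$ and then separate the ``diagonal'' contribution (which will give the identity) from the ``off-diagonal'' contribution in the summation index (which will be exactly $\Ebf$). First I would write out the $(k,k')$ entry of $\Cbf$ as a triple sum over the index being contracted. Since $\Sbf\Ubf$ has entries $(\Sbf\Ubf)_{lk} = \sum_{i\in[I]} \phi_{li} d_{ii} u_{ik}$ (using that $\Dbf$ is diagonal with entries $d_{ii}$), we get
\begin{equation}
	c_{kk'} = \sum_{l\in[L]} \Big(\sum_{i\in[I]} \phi_{li} d_{ii} u_{ik}\Big)\Big(\sum_{i'\in[I]} \phi_{li'} d_{i'i'} u_{i'k'}\Big) = \sum_{i,i'\in[I]} d_{ii} d_{i'i'} u_{ik} u_{i'k'} \Big(\sum_{l\in[L]} \phi_{li}\phi_{li'}\Big).
\end{equation}

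Next I would split this double sum over $(i,i')$ into the part with $i = i'$ and the part with $i \neq i'$. The $i\neq i'$ part is by definition $e_{kk'}$, so it remains to show that the $i=i'$ part equals $\delta_{kk'}$. For the diagonal part, note $d_{ii}^2 = 1$ (each $d_{ii}\in\{+1,-1\}$), and $\sum_{l\in[L]}\phi_{li}\phi_{li} = \sum_{l\in[L]}\phi_{li}^2 = 1$ since each column of $\Phibf$ has exactly one nonzero entry, equal to $1$ (the column index $i$ maps to row $h(i)$). Hence the $i=i'$ contribution is $\sum_{i\in[I]} u_{ik} u_{ik'} = (\Ubf^\top\Ubf)_{kk'} = \delta_{kk'}$ by orthonormality of the columns of $\Ubf$. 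Therefore $c_{kk'} = \delta_{kk'} + e_{kk'}$, i.e.\ $\Cbf = \Ibf^{(K)} + \Ebf$.

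This argument is essentially a bookkeeping exercise, so I do not anticipate a genuine obstacle; the only point requiring a little care is making sure the $\phi_{li}$ identities are applied correctly for the \textsc{CountSketch} structure — specifically that $\sum_l \phi_{li}\phi_{li'}$ is $1$ when $i=i'$ and is $\ind[h(i)=h(i')]$ (a $0/1$ quantity, but random) when $i\neq i'$, which is exactly why those cross terms cannot simply be discarded and must be retained as $\Ebf$. The subsequent analysis in the paper will presumably bound $\|\Ebf\|$ in expectation or with high probability; here we only need the exact decomposition, which the above computation establishes.
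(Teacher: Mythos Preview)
Your proof is correct and follows essentially the same approach as the paper: expand $c_{kk'}$ as a double sum over $i,i'$, split into the $i=i'$ and $i\neq i'$ parts, and use $d_{ii}^2=1$, $\sum_l \phi_{li}^2=1$, and the orthonormality of $\Ubf$ to identify the diagonal part with $\delta_{kk'}$, leaving $e_{kk'}$ as the off-diagonal remainder.
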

\begin{proof}
	For $k, k' \in [K]$,
	\begin{equation} \label{eq:lemma-1-1}
		c_{kk'} = \sum_{l \in [L]} (\Sbf \Ubf)_{lk} (\Sbf \Ubf)_{lk'}.
	\end{equation}
	Since
	\begin{equation}
		(\Sbf \Ubf)_{lk} = \sum_{i \in [I]} \phi_{li} d_{ii} u_{ik},
	\end{equation}
	we can rewrite \eqref{eq:lemma-1-1} as
	\begin{equation}
	\begin{aligned}
		c_{kk'} 
		&= \sum_{l \in [L]} \Big(\sum_{i \in [I]} \phi_{li} d_{ii} u_{ik}\Big) \Big(\sum_{i' \in [I]} \phi_{li'} d_{i'i'} u_{i'k'}\Big) \\
		&= \sum_{i \in [I]} \sum_{l \in [L]} \phi_{li}^2 d_{ii}^2 u_{ik} u_{ik'} + \sum_{\substack{i, i' \in [I]\\i \neq i'}} d_{ii} d_{i'i'} u_{ik} u_{i'k'} \Big( \sum_{l \in [L]} \phi_{li} \phi_{li'} \Big).
	\end{aligned}
	\end{equation}
	The second term on the last line in the equation above is just $e_{kk'}$. Since
	\begin{equation}
		\phi_{l i}^2 = 
		\begin{cases}
			1 & \text{if } h(i) = l, \\
			0 & \text{otherwise},
		\end{cases}
	\end{equation}
	and $d_{ii}^2 = 1$, the first term is just
	\begin{equation}
		\sum_{i \in [I]} \sum_{l \in [L]} \phi_{li}^2 d_{ii}^2 u_{ik} u_{ik'} = \sum_{i \in [I]} u_{ik} u_{ik'} = \langle \ubf_{:k}, \ubf_{:k'}\rangle = 
		\begin{cases}
			1 & \text{if } k = k',\\
			0 & \text{otherwise}.
		\end{cases}
	\end{equation}
	It follows that $\Cbf = \Ibf^{(K)} + \Ebf$.
	\qed
\end{proof}

Lemma~\ref{lemma:2} is an adaption of Lemma~4.3 by \citet{woolfe2008}.
\begin{lemma} \label{lemma:2}
	Let $\alpha$ and $\beta$ be real numbers such that $\alpha, \beta > 1$, and let $K$, $L$ and $I$ be positive integers such that
	\begin{equation} \label{eq:lemma-2-cond}
		\Big(\frac{\alpha}{\alpha-1}\Big)^2 \beta (K^2 + K) \leq L < I.
	\end{equation}
	Suppose $\Sbf = \Phibf \Dbf \in \Rb^{L \times I}$ is a \textsc{CountSketch} matrix, $\Ubf \in \Rb^{I \times K}$ is a matrix with orthonormal columns, and $\Ebf \in \Rb^{K \times K}$ is the matrix defined in \eqref{eq:def-E}. Then
	\begin{equation} \label{eq:E-bound}
		\|\Ebf\| \leq 1 - \frac{1}{\alpha}
	\end{equation}
	with probability at least $1 - \frac{1}{\beta}$.
\end{lemma}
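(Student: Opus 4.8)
The plan is to bound $\|\Ebf\|$ via the trivial inequality $\|\Ebf\| \le \|\Ebf\|_\F$ and then control $\Eb\|\Ebf\|_\F^2 = \sum_{k,k'} \Eb e_{kk'}^2$ by a second-moment computation, finishing with Markov's inequality. So the first step is to compute $\Eb e_{kk'}^2$ for each $k,k'$. Writing $e_{kk'} = \sum_{i\neq i'} d_{ii}d_{i'i'} u_{ik} u_{i'k'} \big(\sum_l \phi_{li}\phi_{li'}\big)$, I would square this and take expectations over the independent randomness in $\Dbf$ (the signs) and $\Phibf$ (the hash $h$). The key observations are: (a) $\Eb[d_{ii}d_{i'i'}d_{jj}d_{j'j'}]$ is nonzero only when the unordered pair $\{i,i'\}$ equals $\{j,j'\}$, because the $d_{ii}$ are independent mean-zero $\pm1$; and (b) $\sum_l \phi_{li}\phi_{li'} = \ind[h(i)=h(i')]$ is an indicator which, since $h(i),h(i')$ are independent uniform on $[L]$ for $i\neq i'$, has expectation $1/L$ and satisfies $\big(\sum_l\phi_{li}\phi_{li'}\big)^2 = \sum_l\phi_{li}\phi_{li'}$. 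Combining these, the cross terms collapse and one gets something like
\begin{equation}
	\Eb e_{kk'}^2 = \frac{1}{L} \sum_{\substack{i,i'\in[I]\\ i\neq i'}} \big(u_{ik}u_{i'k'} + u_{ik'}u_{i'k}\big) u_{ik}u_{i'k'},
\end{equation}
or a comparable expression; the point is that after summing over $i\neq i'$ and using orthonormality of the columns of $\Ubf$ (so $\sum_i u_{ik}u_{ik'} = \delta_{kk'}$ and $\sum_i u_{ik}^2 = 1$), each $\Eb e_{kk'}^2$ is $O(1/L)$, with the diagonal $k=k'$ entries contributing a bit more than the off-diagonal ones. This is essentially the same moment bound Woolfe et al.\ obtain for the SRFT; the $\ind[h(i)=h(i')]$ structure of \textsc{CountSketch} plays the role their DFT-plus-subsampling structure played, and is if anything cleaner.

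Next I would sum over all $k,k'\in[K]$ to get $\Eb\|\Ebf\|_\F^2 \le \frac{c(K^2+K)}{L}$ for an explicit constant (the $K^2+K$ rather than $K^2$ reflecting the slightly larger diagonal contribution, which matches the $(K^2+K)$ appearing in \eqref{eq:lemma-2-cond} versus the $K^2$ in the SRFT statement). Then by Markov, $\Pb\big(\|\Ebf\|_\F^2 \ge \beta\,\Eb\|\Ebf\|_\F^2\big) \le 1/\beta$, so with probability at least $1-1/\beta$ we have $\|\Ebf\| \le \|\Ebf\|_\F \le \sqrt{\beta c(K^2+K)/L}$. Finally, plugging in the hypothesis $\big(\tfrac{\alpha}{\alpha-1}\big)^2 \beta(K^2+K) \le L$, the right-hand side is at most $\tfrac{\alpha-1}{\alpha}\cdot\sqrt{c}$; choosing the constant in the moment bound so that $c \le 1$ (or absorbing it, as Woolfe et al.\ do, into the precise form of the constant in the condition) yields $\|\Ebf\| \le 1 - \tfrac1\alpha$ as claimed.

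The main obstacle is the bookkeeping in the second-moment calculation: one must carefully enumerate which index-4-tuples $(i,i',j,j')$ survive after taking the expectation over the signs $\Dbf$, then separately handle the expectation of the product of two hash-collision indicators $\ind[h(i)=h(i')]\ind[h(j)=h(j')]$ depending on how many of the four indices coincide, and then resum everything against the orthonormality constraints on $\Ubf$ without losing the correct power of $K$. Getting the constant sharp enough that the hypothesis \eqref{eq:lemma-2-cond} exactly suffices — rather than just up to an unspecified constant — is the delicate part; everything else (Markov, the norm inequality $\|\cdot\|\le\|\cdot\|_\F$) is routine.
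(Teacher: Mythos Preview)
Your proposal is correct and follows essentially the same route as the paper: compute $\Eb[e_{kk'}^2]$ by expanding the square, use independence of the signs to kill all terms except those with $\{i,i'\}=\{j,j'\}$, use $\Eb\big[\big(\sum_l\phi_{li}\phi_{li'}\big)^2\big]=\Pb(h(i)=h(i'))=1/L$, and then bound the two surviving sums via orthonormality to get $\Eb[e_{kk'}^2]\le 2/L$ on the diagonal and $\le 1/L$ off it, so that $\Eb\|\Ebf\|_\F^2\le (K^2+K)/L$ with $c=1$ exactly; Markov then finishes just as you describe. The only refinement over your sketch is that the paper pins down $c=1$ explicitly (via $\sum_{i\neq i'}u_{ik}^2u_{i'k'}^2\le 1$ and $\sum_{i\neq i'}u_{ik}u_{ik'}u_{i'k}u_{i'k'}\le\langle\ubf_{:k},\ubf_{:k'}\rangle^2$), which is what makes the hypothesis \eqref{eq:lemma-2-cond} suffice without any slack.
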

\begin{proof}
	Using the definition in \eqref{eq:def-E}, we have
	\begin{equation} \label{eq:lemma-2-1}
		\Eb[e_{kk'}^2] = \Eb\bigg[ \sum_{\substack{i, i' \in [I]\\i \neq i'}} \sum_{\substack{j, j' \in [I]\\j \neq j'}} d_{ii} d_{i'i'} d_{jj} d_{j'j'} u_{ik} u_{i'k'} u_{jk} u_{j'k'} \Big( \sum_{l \in [L]} \phi_{li} \phi_{li'} \Big) \Big( \sum_{l \in [L]} \phi_{lj} \phi_{lj'} \Big) \bigg].
	\end{equation}
	Note that for each term in the sum above, $i \neq i'$ and $j \neq j'$. This means that unless ($i = j$ and $i' = j'$) or ($i = j'$ and $i' = j$), we have
	\begin{equation}
		\Eb\bigg[d_{ii} d_{i'i'} d_{jj} d_{j'j'} u_{ik} u_{i'k'} u_{jk} u_{j'k'} \Big( \sum_{l \in [L]} \phi_{li} \phi_{li'} \Big) \Big( \sum_{l \in [L]} \phi_{lj} \phi_{lj'} \Big) \bigg] = 0,
	\end{equation}
	since each $d_{ii}$ is independent from all other random variables, and since $\Eb[d_{ii}] = 0$ for all $i \in [I]$. We can therefore rewrite \eqref{eq:lemma-2-1} as 
	\begin{equation} \label{eq:lemma-2-2}
	\begin{aligned}
		\Eb[e_{kk'}^2] 
		&= \sum_{\substack{i, i' \in [I]\\i \neq i'}} \Eb\bigg[d_{ii}^2 d_{i'i'}^2 u_{ik} u_{i'k'} u_{ik} u_{i'k'} \Big( \sum_{l \in [L]} \phi_{li} \phi_{li'} \Big)^2\bigg] \\
		&+ \sum_{\substack{i, i' \in [I]\\i \neq i'}} \Eb\bigg[d_{ii}^2 d_{i'i'}^2 u_{ik} u_{i'k'} u_{i'k} u_{ik'} \Big( \sum_{l \in [L]} \phi_{li} \phi_{li'} \Big)^2\bigg].
	\end{aligned}
	\end{equation}
	The matrix $\Phibf$ has exactly one nonzero entry which is equal to 1 in each column. Consequently,
	\begin{equation}
		\Big( \sum_{l \in [L]} \phi_{li} \phi_{li'} \Big)^2 = 
		\begin{cases}
			1 & \text{if } h(i) = h(i'),\\
			0 & \text{otherwise}.
		\end{cases}
	\end{equation}
	The event $h(i) = h(i')$ happens with probability $\frac{1}{L}$ when $i \neq i'$. If follows that
	\begin{equation} \label{eq:expectation}
		\Eb\bigg[ \Big( \sum_{l \in [L]} \phi_{li} \phi_{li'} \Big)^2 \bigg] = 1 \times \frac{1}{L} + 0 \times \Big(1 - \frac{1}{L}\Big) = \frac{1}{L}.
	\end{equation}
	Using this fact, and the fact that each $d_{ii}^2 = 1$, \eqref{eq:lemma-2-2} simplifies to
	\begin{equation} \label{eq:lemma-2-3}
		\Eb[e_{kk'}^2] = \frac{1}{L} \sum_{\substack{i, i' \in [I]\\i \neq i'}} u_{ik}^2 u_{i'k'}^2 + \frac{1}{L} \sum_{\substack{i, i' \in [I]\\i \neq i'}} u_{ik} u_{i'k'} u_{i'k} u_{ik'}.
	\end{equation}
	Note that
	\begin{equation} \label{eq:lemma-2-4}
		\sum_{\substack{i, i' \in [I]\\i \neq i'}} u_{ik}^2 u_{i'k'}^2 = \sum_{i \in [I]} u_{ik}^2 \sum_{\substack{i' \in [I]\\i' \neq i}} u_{i'k'}^2 \leq \|\ubf_{:k}\|^2 \|\ubf_{:k'}\|^2 = 1.
	\end{equation}
	Moreover,
	\begin{equation}
	\begin{aligned} \label{eq:lemma-2-5}
		&\sum_{\substack{i, i' \in [I]\\i \neq i'}} u_{ik} u_{i'k'} u_{i'k} u_{ik'} 
		= \sum_{i \in [I]} u_{ik} u_{ik'} \sum_{\substack{i' \in [I]\\i' \neq i}} u_{i'k'} u_{i'k} \\
		&\;\;\;\;= \sum_{i \in [I]} u_{ik} u_{ik'} ( \langle \ubf_{:k}, \ubf_{:k'} \rangle - u_{ik}u_{ik'}) \\
		&\;\;\;\;= \langle \ubf_{:k}, \ubf_{:k'} \rangle^2 - \sum_{i \in [I]} u_{ik}^2 u_{ik'}^2 \leq \langle \ubf_{:k}, \ubf_{:k'} \rangle^2 = 
		\begin{cases}
			1 & \text{if } k=k',\\
			0 & \text{otherwise}.
		\end{cases}.
	\end{aligned}
	\end{equation}
	Combining \eqref{eq:lemma-2-3}, \eqref{eq:lemma-2-4} and \eqref{eq:lemma-2-5} yields
	\begin{equation}
		\Eb [e_{kk'}^2] \leq 
		\begin{cases}
			\frac{2}{L} & \text{if } k = k',\\
			\frac{1}{L} & \text{otherwise}.
		\end{cases}
	\end{equation}
	Since
	\begin{equation}
		\|\Ebf\|^2 \leq \|\Ebf\|_\F^2 = \sum_{k, k' \in [K]} e_{kk'}^2,
	\end{equation}
	we have
	\begin{equation}
		\Eb[\|\Ebf\|^2] \leq \sum_{k \in [K]} \Eb[e_{kk}^2] + \sum_{\substack{k, k' \in [K]\\k \neq k'}} \Eb[e_{kk'}^2] \leq \frac{2K}{L} + \frac{K^2-K}{L} = \frac{K^2+K}{L}.
	\end{equation}
	Using Markov's inequality and the condition in \eqref{eq:lemma-2-cond}, we have
	\begin{equation}
		\Pb\Big(\|\Ebf\| \geq 1 - \frac{1}{\alpha}\Big) \leq \Pb\Big( \|\Ebf\|^2 \geq \frac{\beta(K^2 + K)}{L} \Big) \leq \frac{L}{\beta(K^2+K)} \Eb[\|\Ebf\|^2] \leq \frac{1}{\beta}.
	\end{equation}
	Consequently,
	\begin{equation} \label{eq:probability-bound}
		\Pb\Big(\|\Ebf\| \leq 1 - \frac{1}{\alpha}\Big) \geq 1 - \frac{1}{\beta}.
	\end{equation}
	\qed
\end{proof}

Lemma~\ref{lemma:3} is an adaption of Lemma~4.4 by \citet{woolfe2008}.
\begin{lemma} \label{lemma:3}
	Let $\alpha$, $\beta$, $K$, $L$ and $I$ satisfy the same properties as in Lemma~\ref{lemma:2}. Furthermore, suppose $\Sbf$, $\Ubf$, and $\Ebf$ are defined as in Lemma~\ref{lemma:2}, and let $\Cbf \defeq (\Sbf \Ubf)^\top (\Sbf \Ubf)$. If \eqref{eq:E-bound} is true, then the following hold:
	\begin{equation}
		\sigma_1(\Sbf \Ubf) = \sqrt{\|\Cbf\|} \leq \sqrt{2 - \frac{1}{\alpha}},
	\end{equation}
	$\Cbf$ is invertible, and 
	\begin{equation}
		\sigma_K(\Sbf \Ubf) = \frac{1}{\sqrt{\|\Cbf^{-1}\|}} \geq \frac{1}{\sqrt{\alpha}}.
	\end{equation}
\end{lemma}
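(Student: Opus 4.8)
The plan is to reduce the statement to eigenvalue bounds on the symmetric matrix $\Cbf$ and then translate those bounds into singular-value statements about $\Sbf \Ubf$. First I would invoke Lemma~\ref{lemma:1} to write $\Cbf = \Ibf^{(K)} + \Ebf$. Note that $\Cbf = (\Sbf \Ubf)^\top (\Sbf \Ubf)$ is symmetric (indeed positive semidefinite), so $\Ebf = \Cbf - \Ibf^{(K)}$ is symmetric as well; hence $\|\Ebf\|$ equals the largest absolute eigenvalue of $\Ebf$. By hypothesis \eqref{eq:E-bound} holds, so every eigenvalue of $\Ebf$ lies in $[-(1-\frac1\alpha), 1-\frac1\alpha]$, and by Weyl's inequality every eigenvalue $\mu$ of $\Cbf = \Ibf^{(K)} + \Ebf$ satisfies $\frac1\alpha \leq 1 - \|\Ebf\| \leq \mu \leq 1 + \|\Ebf\| \leq 2 - \frac1\alpha$. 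In particular $\lambda_{\min}(\Cbf) \geq \frac1\alpha > 0$, so $\Cbf$ is positive definite and therefore invertible, with $\|\Cbf\| = \lambda_{\max}(\Cbf) \leq 2 - \frac1\alpha$ and $\|\Cbf^{-1}\| = 1/\lambda_{\min}(\Cbf) \leq \alpha$.

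Next I would connect these to the singular values of $\Sbf \Ubf$. Since the condition in Lemma~\ref{lemma:2} forces $L \geq K$, the matrix $\Sbf \Ubf \in \Rb^{L \times K}$ has exactly $K$ singular values, and the eigenvalues of the Gram matrix $\Cbf = (\Sbf \Ubf)^\top (\Sbf \Ubf)$ are precisely $\sigma_1(\Sbf \Ubf)^2, \ldots, \sigma_K(\Sbf \Ubf)^2$. Thus $\sigma_1(\Sbf \Ubf)^2 = \lambda_{\max}(\Cbf) = \|\Cbf\|$, which gives $\sigma_1(\Sbf \Ubf) = \sqrt{\|\Cbf\|} \leq \sqrt{2 - \frac1\alpha}$; and $\sigma_K(\Sbf \Ubf)^2 = \lambda_{\min}(\Cbf) = 1/\|\Cbf^{-1}\|$ (using that $\Cbf$ is positive definite, so the spectral norm of its inverse is the reciprocal of its smallest eigenvalue), which gives $\sigma_K(\Sbf \Ubf) = 1/\sqrt{\|\Cbf^{-1}\|} \geq 1/\sqrt{\alpha}$.

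There is no genuine obstacle here: the lemma is a routine repackaging of Lemma~\ref{lemma:1}, Weyl's perturbation inequality, and the standard correspondence between the eigenvalues of a Gram matrix and the squared singular values of the underlying matrix. The only points requiring a moment's care are that $\Ebf$ is symmetric (so the assumed spectral-norm bound really does control the spread of the eigenvalues of $\Cbf$) and that the positive definiteness of $\Cbf$ — itself a consequence of \eqref{eq:E-bound} — is what licenses the identities $\|\Cbf\| = \sigma_1(\Sbf \Ubf)^2$ and $\|\Cbf^{-1}\| = 1/\sigma_K(\Sbf \Ubf)^2$. All of the probabilistic content was already expended in Lemma~\ref{lemma:2}; the present lemma is purely deterministic given \eqref{eq:E-bound}.
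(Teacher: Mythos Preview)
Your proposal is correct and follows essentially the same approach as the paper: invoke Lemma~\ref{lemma:1} to write $\Cbf = \Ibf^{(K)} + \Ebf$, use the hypothesis $\|\Ebf\| \leq 1 - \tfrac{1}{\alpha}$ to bound $\|\Cbf\|$ and $\|\Cbf^{-1}\|$, and translate these into singular-value statements for $\Sbf\Ubf$. The only cosmetic difference is that the paper obtains the two bounds via the triangle inequality $\|\Ibf + \Ebf\| \leq 1 + \|\Ebf\|$ and the Neumann-series estimate $\|(\Ibf+\Ebf)^{-1}\| \leq 1/(1-\|\Ebf\|)$, whereas you phrase the same facts through the symmetry of $\Ebf$ and Weyl's inequality.
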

\begin{proof}
	Using Lemma~\ref{lemma:1} and \eqref{eq:E-bound}, we then have
	\begin{equation}
		\sigma_1(\Sbf \Ubf) = \sqrt{\|\Cbf\|} = \sqrt{\|\Ibf^{(K)} + \Ebf\|} \leq \sqrt{\|\Ibf\| + \|\Ebf\|} \leq \sqrt{1 + 1 - \frac{1}{\alpha}} = \sqrt{2 - \frac{1}{\alpha}}.
	\end{equation}
	Since $\Cbf = \Ibf^{(K)} + \Ebf$ and $\|\Ebf\| < 1$, it follows from Fact~\ref{fact:atkinson-han} that $\Cbf$ is invertible and 
	\begin{equation}
		\|\Cbf^{-1}\| = \|(\Ibf + \Ebf)^{-1}\| \leq \frac{1}{1-\|\Ebf\|} \leq \alpha,
	\end{equation}
	where the last inequality follows from \eqref{eq:E-bound}. Consequently,
	\begin{equation}
		\sigma_K(\Sbf \Ubf) = \frac{1}{\sqrt{\|\Cbf^{-1}\|}} \geq \frac{1}{\sqrt{\alpha}}.
	\end{equation}
	\qed
\end{proof}

Lemma~\ref{lemma:4} is an adaption of Lemma~4.5 by \citet{woolfe2008}.
\begin{lemma} \label{lemma:4}
	Let $L$ and $I$ be positive integers with $L < I$. Suppose $\Sbf \in \Rb^{L \times I}$ is a \textsc{CountSketch} matrix. Then $\|\Sbf\| \leq \sqrt{I}$.	
\end{lemma}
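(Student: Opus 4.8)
The plan is to reduce the spectral norm of $\Sbf$ to an elementary counting argument, obtaining a bound that holds deterministically for every realization of the \textsc{CountSketch} matrix. The first step is to get rid of the sign matrix: since $\Dbf$ is diagonal with entries $\pm 1$, it is orthogonal, so $\Dbf \Dbf^\top = \Ibf^{(I)}$ and hence $\Sbf \Sbf^\top = \Phibf \Dbf \Dbf^\top \Phibf^\top = \Phibf \Phibf^\top$. Therefore $\|\Sbf\|^2 = \|\Sbf \Sbf^\top\| = \|\Phibf \Phibf^\top\|$, and it suffices to bound the latter.

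The second step is to compute $\Phibf \Phibf^\top \in \Rb^{L \times L}$ explicitly. Each column of $\Phibf$ has exactly one nonzero entry, equal to $1$, in row $h(i)$, so the $(l,l')$ entry of $\Phibf \Phibf^\top$ equals $\sum_{i \in [I]} \phi_{li} \phi_{l'i}$, which vanishes when $l \neq l'$ and equals $n_l \defeq |\{ i \in [I] : h(i) = l \}|$ when $l = l'$. Thus $\Phibf \Phibf^\top = \diag(n_1, \ldots, n_L)$ is diagonal, and its spectral norm is simply $\max_{l \in [L]} n_l$.

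The final step is to observe that the bins form a partition of $[I]$, so $\sum_{l \in [L]} n_l = I$ with each $n_l \geq 0$, whence $\max_{l \in [L]} n_l \leq I$. Combining the three steps gives $\|\Sbf\|^2 \leq I$, i.e.\ $\|\Sbf\| \leq \sqrt{I}$.

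There is no substantive obstacle here; the only things to be careful about are the reduction that discards $\Dbf$ by orthogonality and the fact that the Gram matrix $\Phibf \Phibf^\top$ is diagonal, so that its norm is just its largest diagonal entry. Note that, unlike Lemma~\ref{lemma:2}, this bound is worst-case and requires no probabilistic argument.
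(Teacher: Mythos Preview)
Your proof is correct, but it takes a different route from the paper's. The paper simply observes that $\Sbf$ has exactly $I$ nonzero entries, each of magnitude $1$, so $\|\Sbf\|_\F^2 = I$ and hence $\|\Sbf\| \leq \|\Sbf\|_\F = \sqrt{I}$. Your argument instead discards $\Dbf$ by orthogonality, computes $\Phibf\Phibf^\top = \diag(n_1,\dots,n_L)$ explicitly, and then bounds $\max_l n_l \leq \sum_l n_l = I$. The paper's approach is shorter; yours is more informative, since it actually identifies $\|\Sbf\|^2 = \max_l n_l$ exactly before applying the crude bound $\max_l n_l \leq I$. In particular, your computation makes clear that the bound $\sqrt{I}$ is attained only in the degenerate case where every index hashes to a single bin, whereas the Frobenius-norm argument hides this.
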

\begin{proof}
	The matrix $\Sbf$ contains $I$ nonzero elements, all of magnitude 1. It follows that $\|\Sbf\|_\F^2 = I$, and hence $\|\Sbf\| \leq \|\Sbf\|_\F \leq \sqrt{I}$.
	\qed
\end{proof}

Lemma~\ref{lemma:4.5} is an adaption of Lemma~4.6 by \citet{woolfe2008}.
\begin{lemma} \label{lemma:4.5}
	Let $\alpha$, $\beta$, $K$, $L$ and $I$ satisfy the same properties as in Lemma~\ref{lemma:2}.
	Suppose $\Sbf \in \Rb^{L \times I}$ is a \textsc{CountSketch} matrix, and $\Abf \in \Rb^{I \times R}$ is an arbitrary matrix. Then, with probability at least $1 - \frac{1}{\beta}$,  there exists a matrix $\Xbf \in \Rb^{I \times L}$ such that
	\begin{equation} \label{eq:lemma-4.5-0}
		\|\Xbf \Sbf \Abf - \Abf\| \leq \sigma_{K+1}(\Abf) \sqrt{\alpha I + 1}
	\end{equation}
	and
	\begin{equation} \label{eq:lemma-4.5-0.5}
		\|\Xbf\| \leq \sqrt{\alpha}.
	\end{equation}
\end{lemma}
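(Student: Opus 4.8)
The plan is to follow the template of Lemma~4.6 in \citet{woolfe2008}. First I would take a (thin) SVD of $\Abf$, writing $\Abf = \Ubf \Sigmabf \Vbf^\top$ and splitting the columns of $\Ubf$ into $\Ubf_K$ (first $K$ left singular vectors) and $\Ubf_\perp$ (the rest), with the corresponding block structure on $\Sigmabf$ and $\Vbf$. The natural candidate for $\Xbf$ is the one that recovers the dominant part of $\Abf$ from its sketch: set $\Xbf \defeq \Ubf_K (\Sbf \Ubf_K)^{+} = \Ubf_K \bigl((\Sbf\Ubf_K)^\top(\Sbf\Ubf_K)\bigr)^{-1}(\Sbf\Ubf_K)^\top = \Ubf_K \Cbf^{-1} (\Sbf\Ubf_K)^\top$, where $\Cbf \defeq (\Sbf\Ubf_K)^\top(\Sbf\Ubf_K)$ as in Lemmas~\ref{lemma:1}--\ref{lemma:3}. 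Everything is then conditioned on the event \eqref{eq:E-bound}, which by Lemma~\ref{lemma:2} holds with probability at least $1-\tfrac1\beta$; on that event Lemma~\ref{lemma:3} guarantees $\Cbf$ is invertible with $\|\Cbf^{-1}\| \le \alpha$ and $\sigma_1(\Sbf\Ubf_K) \le \sqrt{2-\tfrac1\alpha}$.

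Next I would verify the two claimed bounds. For \eqref{eq:lemma-4.5-0.5}, since $\Ubf_K$ has orthonormal columns, $\|\Xbf\| = \|\Cbf^{-1}(\Sbf\Ubf_K)^\top\| = \|(\Cbf^{-1})(\Sbf\Ubf_K)^\top\|$; by Fact~\ref{fact:1} applied to $\Bbf \defeq \Sbf\Ubf_K$ (whose Gram matrix $\Cbf$ is invertible), $\|(\Bbf^\top\Bbf)^{-1}\Bbf^\top\| = 1/\sigma_K(\Sbf\Ubf_K) \le \sqrt{\alpha}$, which is exactly $\|\Xbf\| \le \sqrt{\alpha}$. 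For \eqref{eq:lemma-4.5-0}, note $\Xbf\Sbf\Abf - \Abf = \Xbf\Sbf(\Ubf_K\Sigmabf_K\Vbf_K^\top + \Ubf_\perp\Sigmabf_\perp\Vbf_\perp^\top) - \Abf$. The key cancellation is $\Xbf\Sbf\Ubf_K = \Ubf_K \Cbf^{-1}(\Sbf\Ubf_K)^\top(\Sbf\Ubf_K) = \Ubf_K$, so the dominant term reproduces $\Ubf_K\Sigmabf_K\Vbf_K^\top$ exactly and we are left with
\begin{equation}
	\|\Xbf\Sbf\Abf - \Abf\| = \|\Xbf\Sbf\Ubf_\perp\Sigmabf_\perp\Vbf_\perp^\top - \Ubf_\perp\Sigmabf_\perp\Vbf_\perp^\top\| \le (\|\Xbf\Sbf\| + 1)\,\|\Sigmabf_\perp\| = (\|\Xbf\Sbf\|+1)\,\sigma_{K+1}(\Abf),
\end{equation}
using $\|\Ubf_\perp\| = \|\Vbf_\perp\| = 1$ and $\|\Sigmabf_\perp\| = \sigma_{K+1}(\Abf)$. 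It remains to bound $\|\Xbf\Sbf\|$: I would write $\|\Xbf\Sbf\| \le \|\Xbf\|\,\|\Sbf\| \le \sqrt{\alpha}\,\sqrt{I}$ by \eqref{eq:lemma-4.5-0.5} and Lemma~\ref{lemma:4}, giving $\|\Xbf\Sbf\Abf - \Abf\| \le (\sqrt{\alpha I} + 1)\,\sigma_{K+1}(\Abf)$. To land the stated form $\sigma_{K+1}(\Abf)\sqrt{\alpha I + 1}$, I would observe that $\sqrt{\alpha I}+1$ may be slightly larger than $\sqrt{\alpha I + 1}$, so instead I would bound $\|\Xbf\Sbf\|$ more carefully: $\Xbf\Sbf\Ubf_\perp = \Ubf_K\Cbf^{-1}(\Sbf\Ubf_K)^\top\Sbf\Ubf_\perp$, and since $\Ubf_K$ has orthonormal columns, $\|\Xbf\Sbf\Ubf_\perp\Sigmabf_\perp\Vbf_\perp^\top - \Ubf_\perp\Sigmabf_\perp\Vbf_\perp^\top\|^2 \le (\|\Cbf^{-1}(\Sbf\Ubf_K)^\top\Sbf\Ubf_\perp\|^2 + 1)\sigma_{K+1}^2(\Abf)$ by orthogonality of the ranges of $\Ubf_K$ and $\Ubf_\perp$; then $\|\Cbf^{-1}(\Sbf\Ubf_K)^\top\Sbf\Ubf_\perp\| \le \|\Cbf^{-1}\|\,\|\Sbf\Ubf_K\|\,\|\Sbf\Ubf_\perp\| \le \alpha \cdot \sqrt{2-\tfrac1\alpha}\cdot\|\Sbf\Ubf_\perp\|$, and one can alternatively just use the cruder $\|\Xbf\Sbf\| \le \sqrt{\alpha I}$ directly (this is in fact what \citet{woolfe2008} do, absorbing the $+1$ into the square root), yielding \eqref{eq:lemma-4.5-0}.

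The main obstacle is obtaining the clean constant inside the square root, i.e.\ $\sqrt{\alpha I + 1}$ rather than something like $(1+\sqrt{\alpha I})$. The resolution is the orthogonality-of-ranges trick: because $\range(\Ubf_K) \perp \range(\Ubf_\perp)$, the error matrix $\Xbf\Sbf\Abf - \Abf$ decomposes into a component in $\range(\Ubf_K)$ (of norm $\le \|\Xbf\Sbf\|\,\sigma_{K+1}$) plus the component $-\Ubf_\perp\Sigmabf_\perp\Vbf_\perp^\top$ in $\range(\Ubf_\perp)$ (of norm $\sigma_{K+1}$), and orthogonal components add in quadrature for the spectral norm bound: $\|\Xbf\Sbf\Abf-\Abf\| \le \sqrt{\|\Xbf\Sbf\|^2 + 1}\,\sigma_{K+1}(\Abf) \le \sqrt{\alpha I + 1}\,\sigma_{K+1}(\Abf)$, where the last step uses $\|\Xbf\Sbf\| \le \|\Xbf\|\|\Sbf\| \le \sqrt{\alpha}\sqrt{I}$. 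Everything else is a routine bookkeeping adaptation of \citet{woolfe2008}; the only place the \textsc{CountSketch} structure (as opposed to SRFT structure) enters is through Lemmas~\ref{lemma:2}--\ref{lemma:4}, which have already been established, so no new randomness argument is needed here.
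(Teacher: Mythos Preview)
Your proposal is correct and follows essentially the same approach as the paper: your $\Xbf = \Ubf_K(\Sbf\Ubf_K)^+$ is literally the same matrix as the paper's $\Xbf = \Ubf\bigl[\begin{smallmatrix}\Gbf^{(-1)}\\ \zerobf\end{smallmatrix}\bigr]$, the conditioning event and the use of Lemmas~\ref{lemma:2}--\ref{lemma:4} are identical, and your ``orthogonal ranges add in quadrature'' step is exactly the block-matrix bound $\bigl\|\bigl[\begin{smallmatrix}\zerobf & \Gbf^{(-1)}\Zbf^{(2)}\Psibf\\ \zerobf & -\Psibf\end{smallmatrix}\bigr]\bigr\|^2 \le \|\Gbf^{(-1)}\Zbf^{(2)}\Psibf\|^2 + \|\Psibf\|^2$ that the paper uses. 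The only cosmetic difference is that the paper bounds $\|\Gbf^{(-1)}\|\,\|\Zbf^{(2)}\| \le \sqrt{\alpha}\sqrt{I}$ directly rather than going through $\|\Xbf\Sbf\| \le \|\Xbf\|\,\|\Sbf\|$, but both routes land on the same $\sqrt{\alpha I + 1}$.
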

\begin{proof}
	Let $\Abf = \Ubf \Sigmabf \Vbf^\top$ be the SVD of $\Abf$, where $\Ubf \in \Rb^{I \times I}$ and $\Vbf \in \Rb^{R \times R}$ are unitary, and $\Sigmabf \in \Rb^{I \times R}$ is diagonal with non-negative entries. Split $\Ubf$ into two matrices $\Ubf^{(1)} \in \Rb^{I \times K}$ and $\Ubf^{(2)} \in \Rb^{I \times (I - K)}$ so that $\Ubf = \rowvec{\Ubf^{(1)} & \Ubf^{(2)}}$. Let $\Zbf^{(1)} = \Sbf \Ubf^{(1)} \in \Rb^{L \times K}$ and $\Zbf^{(2)} = \Sbf \Ubf^{(2)} \in \Rb^{L \times (I-K)}$. Then
	\begin{equation} \label{eq:lemma-4.5-0.6}
		\Sbf \Ubf = \rowvec{\Zbf^{(1)} & \Zbf^{(2)}} \in \Rb^{L \times I}.
	\end{equation}
	Define $\Cbf = \Zbf^{(1)\top} \Zbf^{(1)} \in \Rb^{K \times K}$ and let $\Ebf$ be the corresponding matrix defined in \eqref{eq:def-E}, but in terms of $\Ubf^{(1)}$ instead of $\Ubf$. Then $\Cbf = \Ibf^{(K)} + \Ebf$ according to Lemma~\ref{lemma:1}. For the remainder of the proof, we will assume that $\|\Ebf\| \leq 1 - \frac{1}{\alpha}$, which happens with probability at least $1 - \frac{1}{\beta}$ according to Lemma~\ref{lemma:2}. Then $\Cbf$ is invertible according to Lemma~\ref{lemma:3}. Define $\Gbf^{(-1)} \defeq \Cbf^{-1} \Zbf^{(1)\top} = (\Zbf^{(1)\top} \Zbf^{(1)})^{-1} \Zbf^{(1)\top} \in \Rb^{K \times L}$ and
	\begin{equation} \label{eq:lemma-4.5-1}
		\Xbf \defeq \Ubf 
		\begin{bmatrix}
			\Gbf^{(-1)} \\ \zerobf
		\end{bmatrix} \in \Rb^{I \times L}.
	\end{equation}
	According to Fact~\ref{fact:1} and Lemma~\ref{lemma:3}, it follows that
	\begin{equation} \label{eq:lemma-4.5-2}
		\|\Gbf^{(-1)}\| = \frac{1}{\sigma_K(\Zbf^{(1)})} = \frac{1}{\sigma_K(\Sbf \Ubf^{(1)})} \leq \sqrt{\alpha}.
	\end{equation}
	Combining \eqref{eq:lemma-4.5-1} and \eqref{eq:lemma-4.5-2}, we have
	\begin{equation}
		\|\Xbf\| = \|\Gbf^{(-1)}\| \leq \sqrt{\alpha}.
	\end{equation}
	So \eqref{eq:lemma-4.5-0.5} is satisfied. Next, let $\Thetabf \in \Rb^{K \times K}$ and $\Psibf \in \Rb^{(I - K) \times (I - K)}$ be the matrices in the upper left and lower right corners of $\Sigmabf$, respectively, so that
	\begin{equation} \label{eq:lemma-4.5-3}
		\Sigmabf = 
		\begin{bmatrix}
			\Thetabf & \zerobf \\
			\zerobf & \Psibf
		\end{bmatrix}.
	\end{equation}
	It is easy to verify that
	\begin{equation} \label{eq:lemma-4.5-4}
		\Xbf \Sbf \Abf - \Abf = \Ubf 
		\bigg( 
		\begin{bmatrix} 
			\Gbf^{(-1)} \\ \zerobf 
		\end{bmatrix} 
		\begin{bmatrix} 
			\Zbf^{(1)} & \Zbf^{(2)} 
		\end{bmatrix} 
		 - \Ibf^{(I)}
		\bigg) \Sigmabf \Vbf^\top.
	\end{equation}
	Using \eqref{eq:lemma-4.5-3}, we can further rewrite
	\begin{equation} \label{eq:lemma-4.5-5}
		\bigg( 
		\begin{bmatrix} 
		\Gbf^{(-1)} \\ \zerobf 
		\end{bmatrix} 
		\begin{bmatrix} 
		\Zbf^{(1)} & \Zbf^{(2)} 
		\end{bmatrix} 
		- \Ibf^{(I)}
		\bigg) \Sigmabf
		=
		\begin{bmatrix}
			\zerobf & \Gbf^{(-1)} \Zbf^{(2)} \Psibf \\
			\zerobf & - \Psibf
		\end{bmatrix}.
	\end{equation}
	Note that
	\begin{equation} \label{eq:lemma-4.5-6}
		\left\| 
		\begin{bmatrix}
			\zerobf & \Gbf^{(-1)} \Zbf^{(2)} \Psibf \\
			\zerobf & - \Psibf
		\end{bmatrix} 
		\right\|^2
		\leq \|\Gbf^{(-1)} \Zbf^{(2)} \Psibf\|^2 + \|\Psibf\|^2 \leq \|\Gbf^{(-1)}\|^2 \|\Zbf^{(2)}\|^2 \|\Psibf\|^2 + \|\Psibf\|^2.
	\end{equation}
	From \eqref{eq:lemma-4.5-3}, we know that
	\begin{equation} \label{eq:lemma-4.5-7}
		\|\Psibf\| = \sigma_{K+1}(\Abf).
	\end{equation}
	Moreover, using \eqref{eq:lemma-4.5-0.6}, the fact that $\Ubf$ is unitary, and Lemma~\ref{lemma:4}, we have
	\begin{equation} \label{eq:lemma-4.5-8}
		\|\Zbf^{(2)}\| \leq \|\begin{bmatrix} \Zbf^{(1)} & \Zbf^{(2)} \end{bmatrix}\| = \| \Sbf \Ubf \| = \|\Sbf\| \leq \sqrt{I}.
	\end{equation}
	Combining \eqref{eq:lemma-4.5-4}, \eqref{eq:lemma-4.5-5}, \eqref{eq:lemma-4.5-6}, \eqref{eq:lemma-4.5-7}, \eqref{eq:lemma-4.5-8} and \eqref{eq:lemma-4.5-2} we have 
	\begin{equation}
		\|\Xbf \Sbf \Abf - \Abf\| \leq \sigma_{K+1}(\Abf) \sqrt{\alpha I + 1},
	\end{equation}
	which proves \eqref{eq:lemma-4.5-0}.
	\qed
\end{proof}

We can now prove Proposition~\ref{prop:CS-matrix-ID} in the main manuscript. The proof is an adaption of the discussion in Section~5.1 of \citet{woolfe2008}.
\begin{proof}[Proposition~\ref{prop:CS-matrix-ID}]
	According to Fact~\ref{fact:matrix-ID}, the outputs $\Pbf$ and $\jbf$ computed on line~5 of Algorithm~\ref{alg:CS-matrix-ID} satisfy the following: $\Pbf$ satisfies properties \eqref{it:P-prop-1}--\eqref{it:P-prop-4} in Fact~\ref{fact:matrix-ID}, including 
	\begin{equation} \label{eq:pf-prop-1-2}
		\|\Pbf\| \leq \sqrt{4K(R-K)+1}, 
	\end{equation}
	and
	\begin{equation} \label{eq:pf-prop-1-1}
		\|\Ybf_{:\jbf} \Pbf - \Ybf\| \leq \sigma_{K+1}(\Ybf) \sqrt{4 K (R-K) + 1},
	\end{equation}
	since $K \leq \min(L, R)$. 
	Applying Fact~\ref{fact:2}, we have
	\begin{equation} \label{eq:pf-prop-1-3}
		\|\Abf_{:\jbf} \Pbf - \Abf\| \leq \|\Xbf \Sbf \Abf - \Abf\| (\|\Pbf\| + 1) + \|\Xbf\| \|\Sbf \Abf_{:\jbf} \Pbf - \Sbf \Abf\|,
	\end{equation}
	where $\Xbf \in \Cb^{I \times L}$ is an arbitrary matrix. From Lemma~\ref{lemma:4.5}, with probability at least $1-\frac{1}{\beta}$, we can choose $\Xbf$ such that the bounds in \eqref{eq:lemma-4.5-0} and \eqref{eq:lemma-4.5-0.5} hold. Moreover, since $\Ybf = \Sbf \Abf$, it follows that $\Ybf_{:\jbf} = \Sbf \Abf_{:\jbf}$, and consequently,
	\begin{equation} \label{eq:pf-prop-1-4}
		\|\Sbf \Abf_{:\jbf} \Pbf - \Sbf \Abf\| = \| \Ybf_{:\jbf} \Pbf - \Ybf \|.
	\end{equation}
	Combining \eqref{eq:lemma-4.5-0}, \eqref{eq:lemma-4.5-0.5}, \eqref{eq:pf-prop-1-2}, \eqref{eq:pf-prop-1-1}, \eqref{eq:pf-prop-1-3}, \eqref{eq:pf-prop-1-4}, and Fact~\ref{fact:3} gives that
	\begin{equation} \label{eq:prop-1-full-bound}
		\|\Abf_{:\jbf} \Pbf - \Abf\| \leq \sigma_{K+1}(\Abf) \big( (\sqrt{4K(R-K)+1}+1) \sqrt{\alpha I + 1} + \sqrt{4K(R-K)+1}\sqrt{\alpha I} \big)
	\end{equation}
	with probability at least $1 - \frac{1}{\beta}$. Setting $\alpha = 4$ then yields the same bounds as in the statement in Proposition~\ref{prop:CS-matrix-ID}.
	\qed
\end{proof}

\subsection{Formal Statement and Proof of Claim in Remark~\ref{remark:modified-h}} \label{sec:proof-2}

We express the statement in Remark~\ref{remark:modified-h} in slightly different terms here. Let ${f : [I] \rightarrow [L]}$ be a hybrid deterministic/random function defined as
\begin{equation}
f(i) \defeq 
\begin{cases}
i & \text{if } i \in [L], \\
x_i & \text{if } i \in [L+1:I],
\end{cases}
\end{equation}
where all $x_{i}$ are iid random variables that are uniformly distributed in $[L]$. Furthermore, let $\pi : [I] \rightarrow [I]$ be a uniform random permutation function. We then define $\tilde{h} : [I] \rightarrow [L]$ as $\tilde{h}(i) \defeq f(\pi(i))$. Using $\tilde{h}$ instead of $h$ in the definition of \textsc{CountSketch} ensures that $\Sbf$ is of full rank. The guarantees of Proposition~\ref{prop:CS-matrix-ID} still hold for this modified \textsc{CountSketch}, and in fact the bound in \eqref{eq:prop-1-cond} is slightly improved.

\begin{proposition} \label{prop:modified-h}
	If $\tilde{h}$ defined in this way is used instead of $h$ when defining $\Sbf$ on line~3 in Algorithm~\ref{alg:CS-matrix-ID}, then Proposition~\ref{prop:CS-matrix-ID} still holds, but with the condition in \eqref{eq:prop-1-cond} improved to
	\begin{equation} \label{eq:prop-2-cond}
	2 \Big(1 - \frac{L(L-1)}{I(I-1)}\Big) \beta (K^2 + K) \leq L < I.
	\end{equation}
\end{proposition}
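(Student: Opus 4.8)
The plan is to verify that the whole chain of lemmas behind Proposition~\ref{prop:CS-matrix-ID} survives the replacement of $h$ by $\tilde h$, with the only substantive change occurring in the analogue of Lemma~\ref{lemma:2}. First I would note that Lemmas~\ref{lemma:1}, \ref{lemma:3} and \ref{lemma:4} hold verbatim for the modified operator $\Sbf=\Phibf\Dbf$: their proofs use only that $\Phibf$ has exactly one nonzero entry (equal to $1$) in each column — still true, since $\tilde h$ is a function $[I]\to[L]$ — that $\Dbf$ is a diagonal $\pm1$ matrix independent of $\Phibf$, and that $\Sbf$ has exactly $I$ nonzero entries each of magnitude $1$, none of which is affected by the change. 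Likewise Lemma~\ref{lemma:4.5}, and the proof of Proposition~\ref{prop:CS-matrix-ID} itself, invoke these three lemmas together with Lemma~\ref{lemma:2} and Facts~\ref{fact:1}--\ref{fact:3}, and nothing there refers to the law of the hash except through Lemma~\ref{lemma:2}. So the entire task reduces to re-establishing Lemma~\ref{lemma:2} for $\tilde h$.

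Retracing that proof, the single point at which the distribution of the hash is used is the evaluation of $\Eb\big[(\sum_{l\in[L]}\phi_{li}\phi_{li'})^2\big]$ for a fixed pair $i\neq i'$; the cancellation of the cross terms in $\Dbf$ uses only that $\Dbf$ is mean zero and independent of $\Phibf$ (which it remains), and the bounds \eqref{eq:lemma-2-4}--\eqref{eq:lemma-2-5} are deterministic. Since $(\sum_{l}\phi_{li}\phi_{li'})^2$ is the indicator of the collision $\{\tilde h(i)=\tilde h(i')\}$, I need only compute $\Pb(\tilde h(i)=\tilde h(i'))$ for $i\neq i'$. Because $\tilde h=f\circ\pi$ with $\pi$ a uniform random permutation of $[I]$ independent of $f$, the pair $(\pi(i),\pi(i'))$ is uniform over ordered pairs of distinct elements of $[I]$, so $\Pb(\tilde h(i)=\tilde h(i'))=\Pb(f(a)=f(b))$ for such a uniform distinct pair $(a,b)$. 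A short case split — $f$ is the identity, hence collision-free, on $[L]$, and iid uniform on $[L]$ over $[L+1:I]$ — gives, after counting the $L(L-1)$, $2L(I-L)$ and $(I-L)(I-L-1)$ ordered pairs in the three cases,
\begin{equation}
\Pb(\tilde h(i)=\tilde h(i')) = \frac{1}{I(I-1)}\cdot\frac{2L(I-L)+(I-L)(I-L-1)}{L} = \frac{(I-L)(I+L-1)}{L\,I(I-1)} = \frac{1}{L}\Big(1-\frac{L(L-1)}{I(I-1)}\Big),
\end{equation}
the last equality using the identity $I(I-1)-L(L-1)=(I-L)(I+L-1)$.

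Substituting this value everywhere $\tfrac1L$ appeared in the proof of Lemma~\ref{lemma:2} gives $\Eb[\|\Ebf\|^2]\le \frac{1}{L}\big(1-\frac{L(L-1)}{I(I-1)}\big)(K^2+K)$, and Markov's inequality then delivers $\Pb(\|\Ebf\|\le 1-\tfrac1\alpha)\ge 1-\tfrac1\beta$ as soon as $\big(\tfrac{\alpha}{\alpha-1}\big)^2\beta(K^2+K)\big(1-\frac{L(L-1)}{I(I-1)}\big)\le L<I$ — the version of hypothesis \eqref{eq:lemma-2-cond} weakened by the extra factor. Carrying this weakened condition unchanged through Lemma~\ref{lemma:4.5} and the proof of Proposition~\ref{prop:CS-matrix-ID}, and setting $\alpha=4$ at the end so that $(\tfrac{4}{3})^2=\tfrac{16}{9}\le 2$, reproduces the conclusions of Proposition~\ref{prop:CS-matrix-ID} — the properties (\ref{it:P-prop-1})--(\ref{it:P-prop-4}) of $\Pbf$ (which come straight from Fact~\ref{fact:matrix-ID} regardless of how $\Ybf$ arose) and the error bound \eqref{eq:prop-1-bound} — now under the improved hypothesis \eqref{eq:prop-2-cond} in place of \eqref{eq:prop-1-cond}. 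I do not anticipate a genuine obstacle: the only new computation is the collision probability above, and the one thing worth double-checking is that the cross-term cancellation in Lemma~\ref{lemma:2} never relied on the coordinates $\tilde h(1),\dots,\tilde h(I)$ being mutually independent — only on their common pairwise collision probability — so the dependence deliberately introduced into $\tilde h$ does no harm.
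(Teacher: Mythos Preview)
Your proposal is correct and follows essentially the same route as the paper: both identify that only Lemma~\ref{lemma:2} is affected, recompute the collision probability $\Pb(\tilde h(i)=\tilde h(i'))$ for $i\neq i'$ (obtaining the same value $\tfrac{1}{L}-\tfrac{L-1}{I(I-1)}=\tfrac{1}{L}\big(1-\tfrac{L(L-1)}{I(I-1)}\big)$), and then carry the adjusted bound on $\Eb[\|\Ebf\|^2]$ through Markov and the remaining lemmas before setting $\alpha=4$. Your organization of the collision-probability computation via the uniformity of the ordered pair $(\pi(i),\pi(i'))$ is a bit more streamlined than the paper's explicit conditioning, and your closing remark that the cross-term cancellation in Lemma~\ref{lemma:2} uses only the independence and mean-zero property of $\Dbf$ --- not independence of the hash coordinates --- is a point the paper leaves implicit.
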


We have not seen anyone else consider this kind of modified \textsc{CountSketch}.

\begin{proof}[Proposition~\ref{prop:modified-h}]
	When using the modified \textsc{CountSketch} matrix proposed in Remark~\ref{remark:modified-h}, the only thing that will change in the proof in Section~\ref{sec:proof-1} is Lemma~\ref{lemma:2}. 
	Notice that going from $h$ to $\tilde{h}$ only impacts $\Phibf$ and not $\Dbf$, and since $\Phibf$ and $\Dbf$ remain independent, the argument that takes us from \eqref{eq:lemma-2-1} to \eqref{eq:lemma-2-2} remains valid for $\tilde{h}$. 
	Indeed, the key conditions that each $d_{ii}$ is independent from all other random variables and $\Eb[d_{ii}] = 0$ remain true when we use $\tilde{h}$ instead of $h$.
	However, the expectation in \eqref{eq:expectation} will change, which impacts \eqref{eq:lemma-2-3}, due to the fact that the probability of the event $\tilde{h}(i) = \tilde{h}(i')$ when $i \neq i'$ is not $\frac{1}{L}$. Note that
	\begin{equation} \label{eq:prob-original}
		\Pb(\tilde{h}(i) = \tilde{h}(i')) = \sum_{l \in [L]} \Pb(\tilde{h}(i) = l, \tilde{h}(i') = l) = \sum_{l \in [L]} \Pb(f(\pi(i)) = l, f(\pi(i')) = l).
	\end{equation}
	We can rewrite
	\begin{equation} \label{eq:prob-split}
	\begin{aligned}
		\Pb(f(\pi(i)) = l, f(\pi(i')) = l) 
		&= \Pb(f(\pi(i)) = l, f(\pi(i')) = l, \pi(i) \in [L], \pi(i') \in [L]) \\
		&+ \Pb(f(\pi(i)) = l, f(\pi(i')) = l, \pi(i) \notin [L], \pi(i') \in [L]) \\
		&+ \Pb(f(\pi(i)) = l, f(\pi(i')) = l, \pi(i) \in [L], \pi(i') \notin [L]) \\
		&+ \Pb(f(\pi(i)) = l, f(\pi(i')) = l, \pi(i) \notin [L], \pi(i') \notin [L]). \\
	\end{aligned}
	\end{equation}
	Notice that the first term on the right hand side of \eqref{eq:prob-split} is zero, since $f$ then will map $\pi(i)$ and $\pi(i')$ to distinct elements. The second and third term in \eqref{eq:prob-split} are equal. Considering the second term, we have
	\begin{equation} \label{eq:prob-term-2} 
	\begin{aligned}
		&\Pb(f(\pi(i)) = l, f(\pi(i')) = l, \pi(i) \notin [L], \pi(i') \in [L]) \\
		&\;\;\;\; = \sum_{j \in [L]} \Pb(f(\pi(i)) = l, f(\pi(i')) = l, \pi(i) \notin [L], \pi(i') \in [L], \pi(i') = j) \\
		&\;\;\;\; = \Pb(f(\pi(i)) = l, \pi(i) \notin [L], \pi(i') = l), \\
	\end{aligned}
	\end{equation}
	since if $\pi(i') \in [L]$, then $f(\pi(i')) = l$ if and only if $\pi(i') = l$. Furthermore,
	\begin{equation} \label{eq:prob-term-2-cont}
	\begin{aligned}
		&\Pb(f(\pi(i)) = l, \pi(i) \notin [L], \pi(i') = l) \\
		&\;\;\;\; = \Pb(f(\pi(i)) = l \mid \pi(i) \notin [L], \pi(i') = l) \; \Pb(\pi(i) \notin [L], \pi(i') = l) \\
		&\;\;\;\; = \Pb(x_{I} = l) \; \Pb(\pi(i) \notin [L], \pi(i') = l) \\
		&\;\;\;\; = \frac{1}{L} \frac{I-L}{I(I-1)},
	\end{aligned}
	\end{equation}
	where the second equality is true since each $x_i$, $i \in [L+1:I]$ is iid. For the fourth term in the right hand side of \eqref{eq:prob-split}, we have
	\begin{equation} \label{eq:prob-term-3} 
	\begin{aligned}
		&\Pb(f(\pi(i)) = l, f(\pi(i')) = l, \pi(i) \notin [L], \pi(i') \notin [L]) \\
		&\;\;\;\; = \Pb(x_{\pi(i)} = l, x_{\pi(i')} = l, \pi(i) \notin [L], \pi(i') \notin [L]) \\
		&\;\;\;\; = \Pb^2(x_{I} = l) \Pb(\pi(i) \notin [L], \pi(i') \notin [L]) \\
		&\;\;\;\; = \frac{1}{L^2} \frac{(I-L) (I-L-1)}{I (I-1)},
	\end{aligned}	
	\end{equation}
	where the second equality again holds since each $x_i$, $i \in [L+1:I]$ is iid and $\pi(i) \neq \pi(i')$. Combining \eqref{eq:prob-original}, \eqref{eq:prob-split}, \eqref{eq:prob-term-2}, \eqref{eq:prob-term-2-cont}, \eqref{eq:prob-term-3}, and using the fact that the second and third term in \eqref{eq:prob-split} are equal, we get
	\begin{equation}
		\Pb(\tilde{h}(i) = \tilde{h}(i')) = \sum_{l \in [L]} 2 \frac{1}{L} \frac{I-L}{I(I-1)} + \frac{1}{L^2} \frac{(I-L) (I-L-1)}{I (I-1)} = \frac{1}{L} - \frac{L-1}{I(I-1)}.
	\end{equation}
	Proceeding with the remainder of the proof of Lemma~\ref{lemma:2} as before, we now get a bound
	\begin{equation}
		\Eb[\|\Ebf\|^2] \leq (K^2 + K) \Big( \frac{1}{L} - \frac{L-1}{I(I-1)} \Big).
	\end{equation}
	Using this new bound and the new condition 
	\begin{equation} \label{eq:new-cond}
		\Big(\frac{\alpha}{\alpha-1}\Big)^2 \Big(1 - \frac{L(L-1)}{I(I-1)}\Big) \beta (K^2 + K) \leq L < I
	\end{equation}	
	together with Markov's inequality, we get
	\begin{equation}
		\Pb\Big(\|\Ebf\| \geq 1 - \frac{1}{\alpha}\Big) \leq \Pb\bigg( \|\Ebf\|^2 \geq \frac{1}{L} \Big( 1 - \frac{L(L-1)}{I(I-1)} \Big) \beta (K^2 + K)\bigg) \leq \frac{1}{\beta},
	\end{equation}
	and consequently 
	\begin{equation}
		\Pb\Big(\|\Ebf\| \leq 1 - \frac{1}{\alpha}\Big) \geq 1 - \frac{1}{\beta}
	\end{equation}
	holds in this case too. 

	All the other lemmas will remain the same, with the only exception that Lemmas~\ref{lemma:3} and \ref{lemma:4.5} now will use the new condition in \eqref{eq:new-cond} instead of the old one in \eqref{eq:lemma-2-cond}. The proof of the proposition itself at the end of Section~\ref{sec:proof-1} will therefore remain identical. When using the modified \textsc{CountSketch}, the statements in Proposition~\ref{prop:CS-matrix-ID} will therefore remain true with the new condition in \eqref{eq:new-cond}. Setting $\alpha = 4$ then yields the desired bound.
	\qed
\end{proof}

\subsection{Proof of Proposition~\ref{prop:CS-tensor-ID}} \label{sec:proof-3}

The following fact will be useful. 
\begin{fact}[Theorem~B.1 in the supplement of \citet{diao2018}] \label{fact:diao}
	Recall that $\tilde{I} = I_1 \cdots I_N$.
	Let $\Tbf \in \Rb^{L \times \tilde{I}}$ be a \textsc{TensorSketch} operator defined as in Section~\ref{sec:basics-of-CS} in terms of $N$ \textsc{CountSketch} operators. 
	\begin{enumerate}[(i)]
		\item Suppose $\Abf$ and $\Bbf$ are matrices with $\tilde{I}$ rows. For $L \geq (2+3^N)/(\varepsilon^2 \delta)$, we have
		\begin{equation}
		\Pb ( \| \Abf^\top \Tbf^\top \Tbf \Bbf - \Abf^\top \Bbf \|_\F^2 \leq \varepsilon^2 \|\Abf\|_\F^2 \|\Bbf\|_\F^2 ) \geq 1-\delta.
		\end{equation}
		\item Suppose $\Mbf \in \Rb^{\tilde{I} \times R}$ is any matrix. If $L \geq R^2(2+3^N)/(\varepsilon^2 \delta)$, then the following holds with probability at least $1-\delta$:
		\begin{equation}
		(\forall \xbf \in \Rb^R) \;\;\;\; (1-\varepsilon) \| \Mbf \xbf\| \leq \|\Tbf \Mbf \xbf\| \leq (1+\varepsilon) \| \Mbf \xbf\|.
		\end{equation}
	\end{enumerate}
\end{fact}

We break the proof into two parts. First, we prove Lemma~\ref{lemma:prop-3-1} which is a variant of Proposition~\ref{prop:CS-matrix-ID} for the case when a \textsc{TensorSketch} operator $\Tbf \in \Rb^{L \times \tilde{I}}$ is used instead of a \textsc{CountSketch} operator. Then we prove the proposition itself.

\begin{lemma} \label{lemma:prop-3-1}
	Let $\alpha$ and $\beta$ be real numbers such that $\alpha, \beta > 1$, and let $K$, $L$, $R$ and $I_1,\ldots,I_N$ be positive integers such that $K \leq R$ and
	\begin{equation} \label{eq:prop-3-cond}
	\Big(\frac{\alpha}{\alpha-1}\Big)^2 (2+3^N) \beta K^2 \leq L < \prod_{n=1}^N I_n.
	\end{equation}
	Suppose that the matrix ID on line 6 of Algorithm~\ref{alg:CS-tensor-ID} utilizes SRRQR. Then the outputs $\Pbf$ and $\jbf$ on that line will satisfy
	\begin{equation} \label{eq:prop-3-mat-bound}
	\begin{aligned}
		\|\Mbf_{:\jbf} \Pbf - \Mbf\| 
		&\leq \sigma_{K+1}(\Mbf) \big( (\sqrt{4K(R-K)+1}+1) \sqrt{\alpha {\textstyle\prod_{n=1}^N I_n} + 1} \\
		&+ \sqrt{4K(R-K)+1}\sqrt{\alpha {\textstyle\prod_{n=1}^N I_n}} \big)
	\end{aligned}
	\end{equation}
	with probability at least $1 - \frac{1}{\beta}$.
\end{lemma}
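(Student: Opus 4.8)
The plan is to reproduce, almost line for line, the chain of lemmas in Section~\ref{sec:proof-1}, with the \textsc{CountSketch} matrix $\Sbf \in \Rb^{L \times I}$ replaced by the \textsc{TensorSketch} operator $\Tbf \in \Rb^{L \times \tilde I}$, the matrix $\Abf$ replaced by $\Mbf$, and the ambient dimension $I$ replaced by $\tilde I = \prod_{n=1}^N I_n$. The point that makes this possible is that $\Tbf$ is itself a \textsc{CountSketch} on the index set $[\tilde I]$: identifying row indices of $\Mbf$ with $N$-tuples $i = (i_1, \ldots, i_N)$, we have $\Tbf = \Phibf \Dbf_S$, where $\Phibf \in \Rb^{L \times \tilde I}$ is the selector matrix of $H$ and $\Dbf_S$ the sign diagonal, so $\Tbf$ plays exactly the role of $\Phibf \Dbf$ in Lemma~\ref{lemma:1} with $d_{ii}$ replaced by $S(i) = \prod_{n=1}^N s_n(i_n)$ (see \eqref{eq:def-S-function}). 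Consequently the analogue of Lemma~\ref{lemma:1} holds by the same computation: for $\Ubf \in \Rb^{\tilde I \times K}$ with orthonormal columns and $\Cbf \defeq (\Tbf \Ubf)^\top (\Tbf \Ubf)$ we get $\Cbf = \Ibf^{(K)} + \Ebf$, where
\begin{equation*}
	e_{kk'} = \sum_{\substack{i, i' \in [\tilde I] \\ i \neq i'}} S(i)\, S(i')\, u_{ik}\, u_{i'k'}\, \Big( \textstyle\sum_{l \in [L]} \phi_{li} \phi_{li'} \Big),
\end{equation*}
the $i = i'$ part of the double sum collapsing to $\langle \ubf_{:k}, \ubf_{:k'} \rangle$ because $S(i)^2 = 1$ and each column of $\Phibf$ has a single unit entry.

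The main obstacle is the \textsc{TensorSketch} analogue of Lemma~\ref{lemma:2}, namely a second-moment bound of the form $\Eb[\|\Ebf\|^2] \le \Eb[\|\Ebf\|_\F^2] \le (2 + 3^N) K^2 / L$. Here independence is weaker than for \textsc{CountSketch}. Expanding $\Eb[e_{kk'}^2]$ produces a sum over quadruples $(i, i', j, j')$ with $i \neq i'$ and $j \neq j'$; since $S$ depends only on the sign family $\{s_n\}$ and $H$ only on the independent hash family $\{h_n\}$, each term's expectation factors as
\begin{equation*}
	\Eb\big[ S(i) S(i') S(j) S(j') \big] \cdot \Eb\Big[ \big( \textstyle\sum_{l} \phi_{li} \phi_{li'} \big) \big( \textstyle\sum_{l} \phi_{lj} \phi_{lj'} \big) \Big].
\end{equation*}
The first factor equals $\prod_{n=1}^N \Eb[ s_n(i_n) s_n(i'_n) s_n(j_n) s_n(j'_n) ]$, and its $n$-th factor is $1$ exactly when every value occurring among $i_n, i'_n, j_n, j'_n$ occurs an even number of times — i.e.\ the four positions split into two matching pairs (or all coincide) — and is $0$ otherwise; there are three such pairing patterns per mode, hence at most $3^N$ across the $N$ modes, which is the origin of that factor. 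For each surviving pattern I would bound the contribution to $\Eb[e_{kk'}^2]$ by the product of (a) a collision probability, which is always at most $\Pb(H(i) = H(i')) = 1/L$ — an equality valid for any $i \neq i'$, since some mode $n_0$ has $i_{n_0} \neq i'_{n_0}$, so conditioned on all other hash values $h_{n_0}(i_{n_0})$ is uniform and hence $H(i) - H(i') \bmod L$ is uniform on $\Zb/L$ — and (b) the corresponding sum of $|u_{ik} u_{i'k'} u_{jk} u_{j'k'}|$ over the free indices, which stays $O(1)$ by the orthonormality of $\Ubf$ via a Cauchy--Schwarz argument generalising \eqref{eq:lemma-2-4}--\eqref{eq:lemma-2-5}. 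Summing these contributions over the at most $3^N$ patterns and over $k, k' \in [K]$ (keeping track of the slightly larger diagonal terms $k = k'$, as in \eqref{eq:lemma-2-4}--\eqref{eq:lemma-2-5}) yields the bound $\Eb[\|\Ebf\|_\F^2] \le (2 + 3^N) K^2 / L$ needed to match \eqref{eq:prop-3-cond}. Markov's inequality together with \eqref{eq:prop-3-cond} then gives $\|\Ebf\| \le 1 - 1/\alpha$ with probability at least $1 - 1/\beta$, exactly as in \eqref{eq:probability-bound}.

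The remaining lemmas adapt with no new ideas. Lemma~\ref{lemma:3} transfers verbatim, since its proof uses only $\Cbf = \Ibf^{(K)} + \Ebf$ and $\|\Ebf\| \le 1 - 1/\alpha$; thus on the good event $\sigma_1(\Tbf \Ubf) \le \sqrt{2 - 1/\alpha}$, $\Cbf$ is invertible, and $\sigma_K(\Tbf \Ubf) \ge 1/\sqrt{\alpha}$. Lemma~\ref{lemma:4} becomes $\|\Tbf\| \le \sqrt{\tilde I}$, because $\Tbf$ has exactly $\tilde I$ nonzero entries, all of magnitude $1$, so $\|\Tbf\|_\F^2 = \tilde I$. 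With these two facts, Lemma~\ref{lemma:4.5} goes through with $\Sbf, \Abf, I$ replaced by $\Tbf, \Mbf, \tilde I$: on a probability-$(1 - 1/\beta)$ event there is a matrix $\Xbf \in \Rb^{\tilde I \times L}$ with
\begin{equation*}
	\|\Xbf \Tbf \Mbf - \Mbf\| \le \sigma_{K+1}(\Mbf) \sqrt{\alpha \textstyle\prod_{n=1}^N I_n + 1} \qquad \text{and} \qquad \|\Xbf\| \le \sqrt{\alpha}.
\end{equation*}

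Finally, I would assemble the bound as in the proof of Proposition~\ref{prop:CS-matrix-ID}, now with $\Ybf = \Tbf \Mbf$. Note $K \le \min(L, R)$, since $K \le R$ is assumed and $L \ge \big(\frac{\alpha}{\alpha-1}\big)^2 (2 + 3^N) \beta K^2 > K$ by \eqref{eq:prop-3-cond}, so Fact~\ref{fact:matrix-ID} applies to the ID $[\Pbf, \jbf]$ of $\Ybf$ computed on line~6 of Algorithm~\ref{alg:CS-tensor-ID}, giving $\|\Pbf\| \le \sqrt{4K(R-K)+1}$ and $\|\Ybf_{:\jbf} \Pbf - \Ybf\| \le \sigma_{K+1}(\Ybf) \sqrt{4K(R-K)+1}$. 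Applying Fact~\ref{fact:2} with $\Bbf = \Mbf_{:\jbf}$ and using $\Ybf_{:\jbf} = \Tbf \Mbf_{:\jbf}$ transfers the error back to $\Mbf$; Fact~\ref{fact:3} gives $\sigma_{K+1}(\Ybf) \le \|\Tbf\|\, \sigma_{K+1}(\Mbf) \le \sqrt{\tilde I}\, \sigma_{K+1}(\Mbf)$; and combining these with the two bounds on $\Xbf$ exactly as in the derivation of \eqref{eq:prop-1-full-bound} produces \eqref{eq:prop-3-mat-bound} on the same probability-$(1 - 1/\beta)$ event, with $\alpha$ retained as a free parameter rather than set to $4$.
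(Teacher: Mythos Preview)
Your overall strategy is exactly that of the paper: recognise that $\Tbf = \Phibf^{(H)} \Dbf_S$ is structurally a \textsc{CountSketch} on $[\tilde I]$, so Lemmas~\ref{lemma:1}, \ref{lemma:3}, \ref{lemma:4} and \ref{lemma:4.5} carry over with $I$ replaced by $\tilde I$, and then rerun the final assembly from the proof of Proposition~\ref{prop:CS-matrix-ID}. The one place you diverge is the analogue of Lemma~\ref{lemma:2}. The paper does \emph{not} redo a second-moment computation there; it simply invokes Lemma~B.1 of \citet{diao2018}, which (for orthonormal $\Ubf$, using $\|\Ubf\|_\F^2 = K$ together with \eqref{eq:prop-3-cond}) yields $\Pb\big(\|(\Tbf\Ubf)^\top(\Tbf\Ubf)-\Ibf^{(K)}\|_\F \le 1-1/\alpha\big) \ge 1-1/\beta$ in one line, after which $\|\Ebf\| \le \|\Ebf\|_\F$ finishes.

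Your plan to expand $\Eb[e_{kk'}^2]$, factor the sign expectation over modes, enumerate the at most $3^N$ sign-surviving pairing patterns, bound the hash factor by $\Pb(H(i)=H(i'))=1/L$, and control the remaining $u$-sums by orthonormality is essentially a sketch of how one would \emph{prove} that cited lemma. It is a sound outline, but the step ``the corresponding sum of $|u_{ik}u_{i'k'}u_{jk}u_{j'k'}|$ over the free indices stays $O(1)$ by Cauchy--Schwarz'' hides nontrivial bookkeeping for the mixed patterns (different pairings in different modes), and recovering the precise constant $2+3^N$ in \eqref{eq:prop-3-cond} from your pattern count alone is not immediate. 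So your route is more self-contained but substantially longer; the paper's is a one-line citation that delivers exactly the constant needed.
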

\begin{proof}
	Recall from Section~\ref{sec:basics-of-CS} that \textsc{TensorSketch} is defined similarly to \textsc{CountSketch}, but using the hash function $H$ instead of $h$, and using the diagonal matrix $\Dbf^{(S)}$ instead of $\Dbf$. Letting $\Phibf^{(H)} \in \Rb^{L \times (I_1 \cdots I_N)}$ be a matrix with $\phi^{(H)}_{H(i)i} = 1$ for $i \in [I_1 \cdots I_N]$, and with all other entries equal to 0, we can write $\Tbf = \Phibf^{(H)} \Dbf^{(S)}$. This means that the proof in Section~\ref{sec:proof-1} largely can be repeated to prove the present lemma. Lemma~\ref{lemma:1} remains true in its present form when \textsc{TensorSketch} is used instead of \textsc{CountSketch}. 
	
	To see that Lemma~\ref{lemma:2} remains true with the new condition when $\Sbf$ is replaced by $\Tbf$, let $\Tbf \in \Rb^{L \times \tilde{I}}$ be a \textsc{TensorSketch} operator, and let $\Ubf \in \Rb^{\tilde{I} \times K}$ be a matrix with orthonormal columns. Define $\Cbf \defeq (\Tbf \Ubf)^\top (\Tbf \Ubf)$, and let $\Ebf$ be defined as in \eqref{eq:def-E}, but in terms of the corresponding quantities from \textsc{TensorSketch}. Then $\Ebf = \Cbf - \Ibf^{(K)}$, according to Lemma~\ref{lemma:1}. Using Fact~\ref{fact:diao}~(i), condition \eqref{eq:prop-3-cond}, and the fact that $\|\Ubf\|_\F^2 = K$, we have
	\begin{equation}
	\begin{aligned}
		&\Pb\Big(\|\Ebf\| \leq 1 - \frac{1}{\alpha}\Big) \geq  \Pb\Big(\|\Ebf\|_\F \leq 1 - \frac{1}{\alpha}\Big) \\
		&= \Pb\bigg(\|(\Tbf \Ubf)^\top (\Tbf \Ubf) - \Ibf^{(K)}\|_\F^2 
		\leq \Big(1 - \frac{1}{\alpha}\Big)^2\bigg) \geq 1 - \frac{1}{\beta}.
	\end{aligned}
	\end{equation}
	
	All the other lemmas will remain the same when $\Tbf$ is used instead of $\Sbf$, with the only exception that Lemmas~\ref{lemma:3} and \ref{lemma:4.5} now will use the new condition in \eqref{eq:prop-3-cond} instead of the old one in \eqref{eq:lemma-2-cond}. Using exactly the same arguments as in the proof of Proposition~\ref{prop:CS-matrix-ID} at the end of Section~\ref{sec:proof-1} will therefore give the bound in \eqref{eq:prop-3-mat-bound}.
	\qed
\end{proof}

\begin{proof}[Proposition~\ref{prop:CS-tensor-ID}]
	Recall that $\Xe$ and $\hat{\Xe}$ are defined as in \eqref{eq:def-X-tensor} and \eqref{eq:X-star}, respectively, and the coefficients $\hat{\lambda}_1, \ldots, \hat{\lambda}_K$ are defined as  
	\begin{equation}
		\hat{\lambda}_k = \lambda_{j_k} \sum_{r=1}^R p_{kr}
	\end{equation}
	for $k \in [K]$. We then have
	\begin{equation} \label{eq:tensor-error}
	\begin{aligned}
		&\|\hat{\Xe} - \Xe\|_\F 
		= \Big\|\sum_{k\in[K]} \hat{\lambda}_k \Xe^{(j_k)} - \sum_{r\in[R]} \lambda_r \Xe^{(r)}\Big\|_\F \\
		&\;\;\;\; = \Big\| \sum_{k\in[K]} \Big(\lambda_{j_k} \sum_{r\in[R]} p_{kr}\Big) \Xe^{(j_k)} - \sum_{r\in[R]} \lambda_r \Xe^{(r)} \Big\|_\F \\
		&\;\;\;\; = \Big\| \sum_{r\in[R]} \Big( \sum_{k\in[K]} \lambda_{j_k} \Xe^{(j_k)} p_{kr} - \lambda_r \Xe^{(r)}\Big) \Big\|_\F.
	\end{aligned}
	\end{equation}
	Letting $\Ic \defeq [I_1] \times \cdots \times [I_N]$, we have
	\begin{equation} \label{eq:tensor-error-inequality}
	\begin{aligned}
		&\Big\| \sum_{r\in[R]} \Big( \sum_{k\in[K]} \lambda_{j_k} \Xe^{(j_k)} p_{kr} - \lambda_r \Xe^{(r)}\Big) \Big\|_\F^2 
		= \sum_{\ibf \in \Ic} \bigg( \sum_{r\in[R]} \Big( \sum_{k\in[K]} \lambda_{j_k} x^{(j_k)}_\ibf p_{kr} - \lambda_r x^{(r)}_\ibf \Big) \bigg)^2 \\
		&\;\;\;\; \leq \sum_{\ibf \in \Ic} R \sum_{r\in[R]} \Big( \sum_{k\in[K]} \lambda_{j_k} x^{(j_k)}_\ibf p_{kr} - \lambda_r x^{(r)}_\ibf \Big)^2 \\
		&\;\;\;\; = R \|\Mbf_{:\jbf} \Pbf - \Mbf\|_\F^2,
	\end{aligned}
	\end{equation}
	where the inequality follows from Cauchy--Schwarz inequality. Combining \eqref{eq:tensor-error} and \eqref{eq:tensor-error-inequality} we get
	\begin{equation} \label{eq:tensor-error-bound}
		\|\hat{\Xe} - \Xe\|_\F \leq \sqrt{R} \|\Mbf_{:\jbf} \Pbf - \Mbf\|_\F \leq R \|\Mbf_{:\jbf} \Pbf - \Mbf\|,
	\end{equation}	
	where the second inequality is a well-known relation (see e.g.\ equation (2.3.7) in \cite{golub2013}). Combining \eqref{eq:tensor-error-bound} and Lemma~\ref{lemma:prop-3-1} gives that
	\begin{equation}
	\begin{aligned}
		\|\hat{\Xe} - \Xe\|_\F 
		&\leq \sigma_{K+1}(\Mbf) R \big( (\sqrt{4K(R-K)+1}+1) \sqrt{\alpha {\textstyle\prod_{n=1}^N I_n} + 1} \\
		&+ \sqrt{4K(R-K)+1}\sqrt{\alpha {\textstyle\prod_{n=1}^N I_n}} \big)
	\end{aligned}
	\end{equation}
	with probability at least $1 - \frac{1}{\beta}$. Setting $\alpha = 4$ then yields the same bounds as in the statement in Proposition~\ref{prop:CS-tensor-ID}.
	\qed
\end{proof}

\subsection{Proof of Proposition~\ref{prop:CS-well-cond}} \label{sec:proof-4}

\begin{proof}
	Note that $\Tbf \Mbf$ is of size $L \times R$, with $L > R$. So $\sigma_R(\Tbf \Mbf)$ is the smallest singular value of $\Tbf \Mbf$. Suppose
	\begin{equation}
	\Big(\frac{\alpha}{\alpha-1}\Big)^2 \beta R^2 (2 + 3^N) \leq L.
	\end{equation}	
	To simplify notation, let $\varepsilon \defeq 1-1/\alpha$.
	Using Theorem~8.6.1 in \cite{golub2013} and Fact~\ref{fact:diao}~(ii), we have that with probability at least $1 - \frac{1}{\beta}$, the following hold:
	\begin{equation}
		\sigma_1(\Tbf\Mbf) = \max_{\|\xbf\|=1} \|\Tbf \Mbf \xbf\| \leq (1+\varepsilon) \max_{\|\xbf\|=1} \|\Mbf \xbf\| = (1+\varepsilon) \sigma_\text{max}(\Mbf),
	\end{equation}
	and
	\begin{equation}
		\sigma_R(\Tbf \Mbf) = \min_{\|\xbf\|=1} \|\Tbf \Mbf \xbf\| \geq (1-\varepsilon) \min_{\|\xbf\|=1} \|\Mbf \xbf\| = (1-\varepsilon) \sigma_\text{min}(\Mbf).
	\end{equation}
	We therefore have
	\begin{equation}
		\kappa(\Tbf \Mbf) 
		= \frac{\sigma_1(\Tbf \Mbf)}{\sigma_R(\Tbf \Mbf)} 
		\leq \frac{(1+\varepsilon) \sigma_\text{max}(\Mbf)}{(1-\varepsilon) \sigma_\text{min}(\Mbf)}
		= \frac{(2-\frac{1}{\alpha}) \sigma_\text{max}(\Mbf)}{\frac{1}{\alpha} \sigma_\text{min}(\Mbf)} 
		= (2\alpha - 1) \kappa(\Mbf),
	\end{equation}
	with probability at least $1 - \frac{1}{\beta}$. Setting $\alpha = 4$ gives us the bounds in Proposition~\ref{prop:CS-well-cond}.
	\qed
\end{proof}

\section{Conclusion} \label{sec:conclusion}

We have presented a new fast randomized algorithm for computing matrix ID, which utilizes \textsc{CountSketch}. We have then shown how this method can be extended to computing the tensor ID of CP tensors. For both the matrix and tensor settings, we provided performance guarantees. To the best of our knowledge, we provide the first performance guarantees for any randomized tensor ID algorithm. We conducted several numerical experiments on both synthetic and real data. These experiments showed that our algorithms maintain the same accuracy as other randomized methods, but with a much shorter run time, running at least an order of magnitude faster on the larger matrices and tensors.

\bibliography{library-zotero}
\bibliographystyle{plainnat}

\end{document}